\title{A Catalog of Formulations for the Network Pricing Problem}
\author[1]{Quang Minh Bui}
\author[1]{Bernard Gendron}
\author[1]{Margarida Carvalho}
\affil[1]{CIRRELT and D\'epartement d'informatique et de recherche op\'erationnelle, Universit\'e de Montr\'eal}
\date{}
\pgfplotsset{compat=1.15}
\newtheorem{theorem}{Theorem}
\newtheorem{lemma}[theorem]{Lemma}
\newtheorem{corollary}[theorem]{Corollary}
\newtheorem{definition}[theorem]{Definition}
\newtheorem{assumption}[theorem]{Assumption}
\newtheorem{example}[theorem]{Example}
\newcommand{\tabitem}{~~\llap{\textbullet}~~}
\newcommand{\st}{\text{s.t. }}
\newcommand{\sumk}{\sum_{k \in K}}
\newcommand{\suma}[1]{\sum_{a \in A_{#1}}}
\newcommand{\sumai}[2]{\sum_{a \in A^{#1}_{#2}(i)}}
\newcommand{\x}{x_a^k}
\newcommand{\y}{y_a^k}
\newcommand{\bx}{\hat{x}_a}
\newcommand{\by}{\hat{y}_a}
\newcommand{\ca}{c_a}
\newcommand{\Ta}{T_a}
\newcommand{\Tak}{t_a^k}
\newcommand{\lmk}{\lambda^k}
\newcommand{\dap}{\delta_a^p}
\newcommand{\daq}{\delta_a^q}
\newcommand{\ok}{{o^k}}
\newcommand{\dk}{{d^k}}
\newcommand{\Pk}{P^k}
\newcommand{\pk}{p}
\newcommand{\pik}{\pi^k}
\newcommand{\sump}{\sum_{\pk \in \Pk}}
\newcommand{\subad}[1]{\substack{a \in A_{#1} \\ \dap = 1}}
\newcommand{\zpk}{z^k_{\pk}}
\newcommand{\tauk}{\tau^k}
\newcommand{\qi}{q^i}
\newcommand{\hqi}{\hat{q}^i}
\newcommand{\ps}[1]{p^{(#1)}}
\newcommand{\psj}{\ps{j}}
\newcommand{\ha}{\hat{a}}
\newcommand{\prog}[2][]{(\text{#2}^{#1})}
\newcommand{\ie}{\emph{i.e.}}
\begin{document}

\maketitle

\begin{abstract}
    We study the network pricing problem where the leader maximizes their revenue by
    determining the optimal amounts of tolls to charge on a set of arcs,
    under the assumption that the followers will react rationally and choose the shortest paths to travel.
    Many distinct single-level reformulations to this bilevel optimization program have been proposed,
    however, their relationship has not been established.
    In this paper, we aim to build a connection between those reformulations and
    explore the combination of the path representation with various modeling options,
    allowing us to generate 12 different reformulations of the problem.
    Moreover, we propose a new path enumeration scheme, path-based preprocessing,
    and hybrid framework to further improve performance and robustness when solving the final model.
    We provide numerical results, comparing all the derived reformulations
    and confirming the efficiency of the novel dimensionality reduction procedures.
\end{abstract}

\section{Introduction}
The network pricing problem (NPP) is a bilevel optimization program involving two parties:
a leader and multiple followers. First, the leader sets the prices of several arcs (called tolled arcs) in a network.
Afterwards, the followers choose the optimal paths in the network subject to the prices set by the leader.
The leader's objective is to maximize their profit, while the goal of the followers
is to minimize their own costs of transit across the network. High prices do not always
mean more profit for the leader if only a few followers can afford them. Thus, the leader must
seek a balance between the prices and the demands of the followers reacting to those prices.
In practice, the leader could be a highway authority and the followers could be groups of the population
residing in different neighborhoods or cities.

\subsection{Related Literature}

The NPP was first introduced in Labbé et al. \cite{labbe1998} as a single-commodity problem (one follower),
and later extended to the multi-commodity version (multiple followers) by Brotcorne et al. \cite{brotcorne2001}.
The problem is proven to be NP-hard even when there is only one follower \cite{roch2005}. The first mixed integer linear program (MILP) formulation
was also introduced in \cite{brotcorne2001}. Since then, many improvements to the MILP of \cite{brotcorne2001}
were proposed such as the shortest-path graph model (SPGM) \cite{vanhoesel2003}, preprocessing techniques \cite{bouhtou2007},
path-based models \cite{bouhtou2007, didibiha2006}, valid inequalities,
and tight bounds for big-M parameters \cite{dewez2008}. Other methods were also explored, including multipath enumeration
\cite{brotcorne2011} and tabu search \cite{brotcorne2012}. Variants of the network pricing problem include
the joint network design and pricing problem \cite{brotcorne2008}, the complete toll NPP (clique pricing problem)
\cite{heilporn2010}, and the logit NPP \cite{gilbert2015}.

The NPP has a natural bilevel optimization formulation.
There are general-purpose methods to solve bilevel optimization problems \cite{fischetti2017, tahernejad2020}.
However, due to their complexity, they are usually considered per case to exploit specific problem structures.
A common approach to solve a bilevel optimization problem is to convert it to a single-level reformulation.
For the NPP, the follower problems are linear. Thus, there are three
methods generally used for this conversion based on optimality conditions: strong duality, complementary slackness, and value function.
In the first two methods, the primal and the dual constraints of the follower problems are included in the single-level reformulation
and then accompanied by either strong duality or complementary slackness constraints.
In the third method, only the primal constraints are needed and the optimality of the follower problems
is ensured by the value function constraints.

Strong duality was used in the original formulation proposed in \cite{brotcorne2001}.
The discovery of the path representation led to the use of complementary slackness in the path-based formulations developed in \cite{didibiha2006}.
Value function formulations were mentioned in \cite{heilporn2006} and \cite{bouhtou2007}.
However, these models were presented as separate formulations with little connection in between.

\subsection{Contribution and Paper Organization}

In the first part of this work, our aim is to explore the links between all of the previously mentioned single-level reformulations of the NPP.
First, we show that in strong duality and complementary slackness reformulations, the
path representation can be applied not only to the primal follower problems, but also to the dual problems.
This allows us to show that the value function method is a special case of strong duality when
the dual problems are written in the path representation. Second, we provide a systematic way to formalize
a reformulation by breaking it down into components with different options for each component. This general method
can also be used to categorize all the models mentioned above and explain the connection between them.

The second part of this work is dedicated to the path enumeration process. This is important because
a well-performed path-based reformulation requires an efficient path enumeration algorithm.
Existing methods must enumerate either all the paths \cite{bouhtou2007}
or a set of relevant paths but require multiple calls to a linear solver \cite{didibiha2006}.
Based on previous discoveries on the properties of redundant paths in \cite{bouhtou2007, didibiha2006},
we develop a new path enumeration process that can enumerate relevant paths without employing a linear solver.
Besides that, based on this set of relevant paths, we also derive a new preprocessing method that can be applied to arc-based reformulations.
This preprocessing method is crucial if for a fair comparison between the reformulations, since
path enumeration is also counted as a preprocessing method for the path-based reformulations.

Regardless of the efficiency of the path enumeration process, the potential size of the set of all relevant paths
is exponential, which may require an enormous amount of time to enumerate them.
In the last part of this work, we introduce a method to circumvent this problem by taking advantage of the
multi-commodity nature of the NPP. By deciding which commodities are worth applying path enumeration
and mixing different reformulations into one, we can save time by spending less time enumerating very large sets of paths
and more time solving the actual optimization model. In this paper, we refer to this solution as the hybrid framework for multi-commodity problems.

This paper is structured as follows. In Section 2,
we describe the NPP, namely, its bilevel optimization program, and explain the general method to formalize a reformulation.
Section 3 shows how path enumeration is performed in the novel preprocessing method.
The hybrid framework is introduced in Section 4.
Section 5 presents computational results, including a comparison of all formulations
and several experiments to prove the efficiency of the new preprocessing method and the hybrid framework.

\section{The Network Pricing Problem}
In this section, we provide the bilevel programming formulation for the network pricing problem in Section 2.1
and a systematic way to generate single-level reformulations through the combination of different components in Section 2.2.
The conversion from the bilevel formulation to single-level reformulations produces bilinear terms.
The linearization of these bilinear terms is the topic of discussion in Section 2.3.

\subsection{Problem Formulation}

Let us consider a graph \(G = (V, A)\) where \(A\) is partitioned into a set of toll arcs \(A_1\)
and a set of toll-free arcs \(A_2\). Each arc \(a \in A\) has an initial cost \(c_a > 0\).
Let \(K\) be the set of commodities (O-D pairs) and \(o^k, d^k, \eta^k\) be the origin, the destination,
and the demand of commodity \(k \in K\), respectively. We define the following variables:
\begin{itemize}
    \item \(\Ta, a \in A_1\): the toll of arc \(a\);
    \item \(\x, a \in A_1, k \in K\): the flow of commodity \(k\) on toll arc \(a\);
    \item \(\y, a \in A_2, k \in K\): the flow of commodity \(k\) on toll-free arc \(a\).
\end{itemize}

The network pricing problem is then formulated as a bilevel program:
\begin{align*}
    \prog{NPP}\quad \max_{T \geq 0,x,y} & \sumk \suma1 \eta^k \Ta\x                 \\
    \st \forall k \in K             & \left\{ \begin{aligned}
        (x^k, y^k) \in \arg\min_{\hat{x}, \hat{y}} & \suma1 (\ca+\Ta)\bx + \suma2 \ca\by,                                                      \\
        \st                                        & \sumai{+}1 \bx + \sumai{+}2 \by - \sumai{-}1 \bx - \sumai{-}2 \by = b_i^k &  & i \in V,   \\
                                                   & \bx \in \{0, 1\}                                                          &  & a \in A_1, \\
                                                   & \by \in \{0, 1\}                                                          &  & a \in A_2,
    \end{aligned} \right.
\end{align*}
where \(b_i^k = 1\) if \(i = o^k\), \(-1\) if \(i = d^k\), and \(0\) otherwise. For a node $i$, the set of its outgoing  tolled (toll-free) arcs is \(A^+_1(i)\) (\(A^+_2(i)\))
and the set of its incoming tolled (toll-free) arcs is \(A^-_1(i)\) (\(A^-_2(i)\)).
The leader controls the toll prices \(\Ta\), while each follower \(k \in K\) decides the taken paths by setting \(\x\) and \(\y\) equal to 1
if the arc belongs to the path, and 0 otherwise.
We consider the optimistic version of this bilevel problem, \ie, if a follower has multiple optimal solutions
with respect to the prices set by the leader, then they will choose the solution that benefits the leader the most.
For each commodity, we assume that there exists at least a toll-free path (a path without tolled arcs).
Otherwise, the leader could gain infinite revenue by exploiting that particular O-D pair.

\subsection{Generalized Single-level Reformulations}

A generalized single-level reformulation of the network pricing problem consists of three components:
\begin{itemize}
    \item Primal representation (arc or path);
    \item Dual representation (arc or path);
    \item Optimality condition (strong duality or complementary slackness);
\end{itemize}

First, the primal and the dual representations of the follower problems need to be chosen.
There are two options for each: the arc representation and the path representation.
Consider the follower problem for commodity \(k\):
\begin{align}
    \prog[k]{PA} &  & \min_{x^k, y^k} & \suma1 (\ca+\Ta)\x + \suma2 \ca\y \nonumber                                                                     \\
                 &  & \st             & \sumai{+}1 \x + \sumai{+}2 \y - \sumai{-}1 \x - \sumai{-}2 \y = b_i^k &  & i \in V,                  \label{pa} \\
                 &  &                 & \x \in \{0, 1\}                                                       &  & a \in A_1, \nonumber                 \\
                 &  &                 & \y \in \{0, 1\}                                                       &  & a \in A_2. \nonumber
\end{align}

This is called the \emph{primal-arc representation} of the follower problem.
Because the set of constraints of the follower problem forms a totally unimodular matrix,
integrality conditions for \(\x\) and \(\y\) can be dropped. This enables us to write
the follower problem in the \emph{dual-arc representation}:
\begin{align}
    \prog[k]{DA} &  & \max_{\lmk} \  & \lmk_\ok - \lmk_\dk \nonumber                                             \\
                 &  & \st            & \lmk_i - \lmk_j \leq \ca + \Ta &  & a \equiv (i, j) \in A_1,  \label{da1} \\
                 &  &                & \lmk_i - \lmk_j \leq \ca       &  & a \equiv (i, j) \in A_2.  \label{da2}
\end{align}

In \cite{bouhtou2007, didibiha2006}, instead of writing the follower problem using arc-flow \(\x, \y\), the authors
replaced these variables with \(\zpk\) representing the path-flow or the selection of paths \(p\) in the set of all elementary paths \(\Pk\)
which is finite. We will refer this as the \emph{primal-path representation}:
\begin{align}
    \prog[k]{PP} &  & \min_{z^k} \  & \sump \left( \suma{} \dap\ca + \suma1 \dap\Ta \right)\zpk \nonumber                           \\
                 &  & \st           & \sump \zpk = 1,                                                     &  & \label{pp}           \\
                 &  &               & \zpk \in \{0, 1\}                                                   &  & p \in \Pk. \nonumber
\end{align}

In the above formulation, \(\dap = 1\) if the path \(p\) includes the arc \(a\) and \(\dap = 0\) if not.
Once again, the constraints form a totally unimodular matrix, hence \(\zpk\) is not required to be binary and
thus the follower problem also has a \emph{dual-path representation}:
\begin{align}
    \prog[k]{DP} &  & \max_{L^k} \  & L^k \nonumber                                                                        \\
                 &  & \st           & L^k \leq  \suma{} \dap\ca + \suma1 \dap\Ta &  & p \in \Pk.                \label{dp}
\end{align}

In total, there are 4 combinations of primal-dual representations (arc-arc, arc-path, path-arc, and path-path).
Note that the primal and the dual representations are not required to match each other,
so primal-arc could also be paired with dual-path.

After deciding the representations, we need to connect the primal and the dual representations
by either strong duality or complementary slackness constraints.
Strong duality simply connects the primal and the dual objective functions together.
Below are the strong duality constraints for the arc-arc, arc-path, path-arc, and path-path combinations, respectively:
\begin{align}
    \suma1 (\ca+\Ta)\x + \suma2 \ca\y                         & = \lmk_{o^k} - \lmk_{d^k}, \label{sd-aa} \\
    \suma1 (\ca+\Ta)\x + \suma2 \ca\y                         & = L^k,                     \label{sd-ap} \\
    \sump \left( \suma{} \dap\ca + \suma1 \dap\Ta \right)\zpk & = \lmk_{o^k} - \lmk_{d^k}, \label{sd-pa} \\
    \sump \left( \suma{} \dap\ca + \suma1 \dap\Ta \right)\zpk & = L^k.                     \label{sd-pp}
\end{align}

Complementary slackness matches bounded variables in the primal with inequality constraints in the dual.
The pairing is simple if the primal and the dual have the same representation (either arc-arc or path-path):
\begin{align}
    (\ca + \Ta - \lmk_i + \lmk_j)\x                           & = 0 &  & a \equiv (i, j) \in A_1, \label{cs-aa1} \\
    (\ca - \lmk_i + \lmk_j)\y                                 & = 0 &  & a \equiv (i, j) \in A_2, \label{cs-aa2} \\
    \left(\suma{} \dap\ca + \suma1 \dap\Ta - L^k \right) \zpk & = 0 &  & p \in \Pk.          \label{cs-pp}
\end{align}

However, if the primal and the dual have different representations, variables in the primal representation must be converted
to their dual counterparts. For the path-arc combination, conversion from \(z\) to \((x, y)\) is done by using the identities
\(\x, \y = \sump \dap\zpk\):
\begin{align}
    (\ca + \Ta - \lmk_i + \lmk_j)\left(\sump \dap\zpk \right) & = 0 &  & a \equiv (i, j) \in A_1, \label{cs-pa1} \\
    (\ca - \lmk_i + \lmk_j)\left(\sump \dap\zpk \right)       & = 0 &  & a \equiv (i, j) \in A_2. \label{cs-pa2}
\end{align}

For the arc-path combination, the conversion from \((x, y)\) to \(z\) is more complicated. We will use the definition that
\(\zpk = 1\) if and only if \(\x = 1\) and \(\y = 1\) for all arcs \(a\) belonging to the path \(p\) (given that \((x, y)\) produce an elementary path).
Mathematically, \(\zpk = \prod_{a \in A_1 \mid \dap = 1} \x \prod_{a \in A_2 \mid \dap = 1} \y \).
The complementary slackness constraint for the arc-path combination is:
\begin{align}
    \left(\suma{} \dap\ca + \suma1 \dap\Ta - L^k \right) \prod_{\subad1} \x \prod_{\subad2} \y & = 0 &  & p \in \Pk. \label{cs-ap}
\end{align}

An important rule of the conversion is that it must cover all possible solutions in the primal representation.
Otherwise, the optimality condition can be dodged by choosing the primal solution that is not covered.
If we leave the graph and \(\Pk\) as inputted, this rule is satisfied. However, in Section 3, we will introduce methods that
reduce the size of the graph and \(\Pk\), thus requiring additional attention. We will discuss this later in Section 3.2.

For completeness, accordingly with the choice of the primal representation, here are the objective function of the single-level formulations:
\begin{align}
     & \sumk \suma1 \eta^k \Ta\x,                                  \label{obj-x} \\
     & \sumk \suma1 \sump \eta^k \dap\Ta\zpk. \label{obj-z}
\end{align}

Based on the primal, the dual, and the optimality condition, we can categorize all the single-level formulations as in Table \ref{tab:map}.
The variables, constraints and objective functions of these formulations are listed in Table \ref{tab:content}.
The variables are bounded implicitly (\(x, y, z, T\) are non-negative; \(\lambda, L\) are unbounded).

\begin{table}[h]
    \begin{subtable}[h]{\textwidth}
        \centering
        \begin{tabular}{|c||c|c|}
            \hline
            \diagbox{Dual}{Primal} & Arc                 & Path                          \\ \hline \hline
            Arc                    & Standard (STD)      & \makecell{Path-Arc Standard   \\ (PASTD)}       \\ \hline
            Path                   & Value Function (VF) & \makecell{Path Value Function \\ (PVF)} \\ \hline
        \end{tabular}
        \subcaption{Strong duality}
    \end{subtable}
    \par\bigskip
    \begin{subtable}[h]{\textwidth}
        \centering
        \begin{tabular}{|c||c|c|}
            \hline
            \diagbox{Dual}{Primal} & Arc                      & Path \\ \hline \hline
            Arc                    & \makecell{Complementary         \\ Slackness (CS)}  & \makecell{Path-Arc \\ Complementary \\ Slackness (PACS)}       \\ \hline
            Path                   & \makecell{Value Function        \\ Complementary        \\ Slackness (VFCS)} & \makecell{Path Complementary \\ Slackness (PCS)} \\ \hline
        \end{tabular}
        \subcaption{Complementary slackness}
    \end{subtable}
    \caption{Complete map of all single-level reformulations.}
    \label{tab:map}
\end{table}

\begin{table}[h]
    \centering
    \begin{tabular}{llllll}
        \toprule
              &                       &               & \multicolumn{3}{c}{Constraints}                                        \\
        \cmidrule(lr){4-6}
        Label & Variables             & Objective     & Primal     & Dual                     & Opt. Cond.                     \\
        \midrule
        \textbf{Strong duality}\span\span\span                                                                                 \\
        STD   & \(T, x, y, \lambda \) & \eqref{obj-x} & \eqref{pa} & \eqref{da1}, \eqref{da2} & \eqref{sd-aa}                  \\
        VF    & \(T, x, y, L \)       & \eqref{obj-x} & \eqref{pa} & \eqref{dp}               & \eqref{sd-ap}                  \\
        PASTD & \(T, z, \lambda \)    & \eqref{obj-z} & \eqref{pp} & \eqref{da1}, \eqref{da2} & \eqref{sd-pa}                  \\
        PVF   & \(T, z, L \)          & \eqref{obj-z} & \eqref{pp} & \eqref{dp}               & \eqref{sd-pp}                  \\
        \midrule
        \textbf{Complementary slackness}\span\span\span                                                                        \\
        CS    & \(T, x, y, \lambda \) & \eqref{obj-x} & \eqref{pa} & \eqref{da1}, \eqref{da2} & \eqref{cs-aa1}, \eqref{cs-aa2} \\
        VFCS  & \(T, x, y, L \)       & \eqref{obj-x} & \eqref{pa} & \eqref{dp}               & \eqref{cs-ap}                  \\
        PACS  & \(T, z, \lambda \)    & \eqref{obj-z} & \eqref{pp} & \eqref{da1}, \eqref{da2} & \eqref{cs-pa1}, \eqref{cs-pa2} \\
        PCS   & \(T, z, L \)          & \eqref{obj-z} & \eqref{pp} & \eqref{dp}               & \eqref{cs-pp}                  \\
        \bottomrule
    \end{tabular}
    \caption{List of variables, constraints, and objective functions of all single-level reformulations.}
    \label{tab:content}
\end{table}

There are some notable reformulations in the list.
The most canonical way to convert a bilevel linear program to its
single-level version is the standard formulation (STD):
\begin{align*}
    \prog{STD} &  & \max\  & \sumk \suma1 \eta^k \Ta \x                                                                                   \\
               &  & \st    & \sumai{+}1 \x + \sumai{+}2 \y - \sumai{-}1 \x - \sumai{-}2 \y = b_i^k &  & k \in K, i \in V,                 \\
               &  &        & \lmk_i - \lmk_j \leq \ca + \Ta                                        &  & k \in K, a \equiv (i, j) \in A_1, \\
               &  &        & \lmk_i - \lmk_j \leq \ca                                              &  & k \in K, a \equiv (i, j) \in A_2, \\
               &  &        & \suma1 (\ca+\Ta)\x + \suma2 \ca\y = \lmk_{o^k} - \lmk_{d^k}           &  & k \in K,                          \\
               &  &        & \x \geq 0                                                             &  & k \in K, a \in A_1,               \\
               &  &        & \y \geq 0                                                             &  & k \in K, a \in A_2,               \\
               &  &        & \Ta \geq 0                                                            &  & a \in A_1.
\end{align*}

This formulation has been referred to in most papers in the literature \cite{heilporn2006,didibiha2006,bouhtou2007,dewez2008},
including the first paper on the NPP \cite{brotcorne2001}. It only uses arc representations, so the need for path enumeration is eliminated.
Its linearized version (Section 2.3) only requires \(x\) to be integer. Furthermore, the number of arcs is fixed, hence,
it is reliable compared to the path-based formulations whose sizes depend on the number of paths of each commodity which is unknown.
Overall, the standard formulation is the most straightforward method to solve the NPP.

The second reformulation in which we are interested is the value function formulation (VF):
\begin{align*}
    \prog{VF} &  & \max\  & \sumk \suma1 \eta^k \Ta \x                                                                      \\
              &  & \st    & \sumai{+}1 \x + \sumai{+}2 \y - \sumai{-}1 \x - \sumai{-}2 \y = b_i^k &  & k \in K, i \in V,    \\
              &  &        & L^k \leq  \suma{} \dap\ca + \suma1 \dap\Ta                            &  & k \in K,  p \in \Pk, \\
              &  &        & (\ca+\Ta)\x + \suma2 \ca\y = L^k                                      &  & k \in K,             \\
              &  &        & \x \geq 0                                                             &  & k \in K, a \in A_1,  \\
              &  &        & \y \geq 0                                                             &  & k \in K, a \in A_2,  \\
              &  &        & \Ta \geq 0                                                            &  & a \in A_1.
\end{align*}

If we combine the second and the third constraints, a new constraint emerges:
\begin{equation}
    \suma1 (\ca + \Ta)\x + \suma2 \ca\y \leq \suma{} \dap\ca + \suma1 \dap\Ta \hspace{1cm} p \in \Pk. \label{vf}
\end{equation}

The left hand side of Eq. \eqref{vf} is the cost of the current path, while the right hand side is the cost of path \(p\).
Eq. \eqref{vf} means that the cost of the current path must not surpass the cost of any path in \(\Pk\), which restricts \((x, y)\)
to choose the path with least cost. This is called the value function constraint and it is also a popular method to generate the single-level formulation
of a bilevel problem. Here, we have shown that the value function method is just a special case of strong duality
when we write the dual representation of the follower problems in the solution space (dual-path in the case of the NPP).
The value function formulation for the NPP was previously mentioned in \cite{heilporn2006}.

The last formulation in which we focus is the path complementary slackness formulation (PCS):
\begin{align*}
    \prog{PCS} &  & \max\  & \sumk \suma1 \sump \eta^k \dap\Ta\zpk                                                   \\
               &  & \st    & \sump \zpk = 1                                                &  & k \in K,             \\
               &  &        & L^k \leq  \suma{} \dap\ca + \suma1 \dap\Ta                    &  & k \in K,  p \in \Pk, \\
               &  &        & \left(\suma{} \dap\ca + \suma1 \dap\Ta - L^k \right) \zpk = 0 &  & k \in K,  p \in \Pk, \\
               &  &        & \zpk \geq 0                                                   &  & k \in K, p \in \Pk,  \\
               &  &        & \Ta \geq 0                                                    &  & a \in A_1.
\end{align*}

This formulation is the path-based model of Didi-Biha et al. \cite{didibiha2006}.
It is the complete opposite of the standard formulation. It only uses path representations,
hence its performance heavily relies on the number of paths in \(\Pk\). Methods for reducing the size of \(\Pk\)
will be discussed in Section 3. In this formulation, only the selection of \(p\) is important and
any information on the structure of the graph is discarded. The advantage of this formulation is its simplicity, especially when
the size of \(\Pk\) is small.

\subsection{Linearization}

In all eight single-level formulations, there are bilinear terms (or in the case of VFCS, multilinear terms).
In order to solve these formulations using MILP solvers, these terms need to be linearized.
We will consider formulations using strong duality and those using complementary slackness separately.

\subsubsection{Linearization of Strong Duality Formulations}
In the strong duality formulations, the bilinear terms arise from the leader's revenue, which are either
\(\suma1 \Ta\x\) or \(\suma1 \sump \dap\Ta\zpk\). Because \(\x\) and \(\zpk\) can only take 0 and 1
as their values, we can force them to be binary and add new variables \(\Tak = \Ta\x = \Ta \sump \dap\zpk\) with the following constraints:
\begin{align}
     & 0 \leq \Tak \leq M_a^k \x           &  & a \in A_1, \label{directa1} \\
     & 0 \leq \Ta - \Tak \leq N_a (1 - \x) &  & a \in A_1 \label{directa2}
\end{align}
for the primal-arc representation and
\begin{align}
     & 0 \leq \Tak \leq M_a^k \sump \dap\zpk                       &  & a \in A_1, \label{directp1} \\
     & 0 \leq \Ta - \Tak \leq N_a \left(1 - \sump \dap\zpk \right) &  & a \in A_1 \label{directp2}
\end{align}
for the primal-path representation. \(M_a^k\) and \(N_a\) are big-M parameters.
We refer this method of linearization as \emph{direct linearization}.
Tight bounds for \(M_a^k\) and \(N_a\) can be found in Dewez et al. \cite{dewez2008}.
The leader revenue for each commodity becomes \(\suma1 \eta^k\Tak\), which we can use to replace the bilinear terms.
The strong duality constraints of the four linearized formulations are as follows:
\begin{align}
    \suma1 (\ca\x+\Tak) + \suma2 \ca\y      & = \lmk_{o^k} - \lmk_{d^k}, \label{lin-sd-aa} \\
    \suma1 (\ca\x+\Tak) + \suma2 \ca\y      & = L^k,                     \label{lin-sd-ap} \\
    \sump \suma{} \dap\ca\zpk + \suma1 \Tak & = \lmk_{o^k} - \lmk_{d^k}, \label{lin-sd-pa} \\
    \sump \suma{} \dap\ca\zpk + \suma1 \Tak & = L^k.                     \label{lin-sd-pp}
\end{align}

The objective function of the linearized formulation is independent from the primal representation:
\begin{align}
    \sumk \suma1 \eta^k \Tak.    \label{lin-obj-direct}
\end{align}

A summary of all linearized strong duality formulations is provided in Table \ref{tab:content-lin-sd}.
The variables with stars are required to be binary.

\begin{table}[h]
    \centering
    \begin{tabular}{lllllll}
        \toprule
              &                            &                        & \multicolumn{4}{c}{Constraints}                                                                \\
        \cmidrule(lr){4-7}
        Label & Variables                  & Obj.                   & Primal     & Dual                     & Opt. Cond.        & Linearization                      \\
        \midrule
        STD   & \(T, t, x^*, y, \lambda \) & \eqref{lin-obj-direct} & \eqref{pa} & \eqref{da1}, \eqref{da2} & \eqref{lin-sd-aa} & \eqref{directa1}, \eqref{directa2} \\
        VF    & \(T, t, x^*, y, L \)       & \eqref{lin-obj-direct} & \eqref{pa} & \eqref{dp}               & \eqref{lin-sd-ap} & \eqref{directa1}, \eqref{directa2} \\
        PASTD & \(T, t, z^*, \lambda \)    & \eqref{lin-obj-direct} & \eqref{pp} & \eqref{da1}, \eqref{da2} & \eqref{lin-sd-pa} & \eqref{directp1}, \eqref{directp2} \\
        PVF   & \(T, t, z^*, L \)          & \eqref{lin-obj-direct} & \eqref{pp} & \eqref{dp}               & \eqref{lin-sd-pp} & \eqref{directp1}, \eqref{directp2} \\
        \bottomrule
    \end{tabular}
    \caption{List of variables, constraints, and objective functions of all linearized strong duality formulations.}
    \label{tab:content-lin-sd}
\end{table}

\subsubsection{Linearization of Complementary Slackness Formulations}

In complementary slackness formulations, bilinear terms appear in all complementary slackness constraints.
In the arc-arc (CS) and path-path (PCS) combinations, each complementary slackness constraint is gated by a primal variable (\(x, y\), or \(z\)),
which can only take binary values. Thus, we could use them as branching condition for the constraints. Here are the linearized constraints
of the arc-arc combination (CS):
\begin{align}
    \lmk_i - \lmk_j & \geq \ca + \Ta - R_a^k(1 - \x) &  & a \equiv (i, j) \in A_1, \label{lin-cs-aa1} \\
    \lmk_i - \lmk_j & \geq \ca       - R_a^k(1 - \y) &  & a \equiv (i, j) \in A_2, \label{lin-cs-aa2}
\end{align}
and of the path-path combination (PCS):
\begin{align}
    L^k & \geq \suma{} \dap\ca + \suma1 \dap\Ta - S^k_{\pk}(1 - \zpk) &  & \pk \in \Pk. \label{lin-cs-pp}
\end{align}

\(R_a^k\) and \(S^k_{\pk}\) are big-M parameters depending on the dual representation
(\(R_a^k\) for dual-arc and \(S^k_{\pk}\) for dual-path).
When \(x, y\) or \(z\) are equal to 1, the big-M terms are dropped. Combining with the dual constraints,
these complementary slackness constraints force them to be active. If the primal variables are equal to 0,
the big-M terms make the constraints redundant, essentially removing them from the formulation.

To calculate the values of these big-M parameters,
recalling that \(N_a\) is the upper bound of \(\Ta\) defined in \eqref{directa2}, denote:
\begin{itemize}
    \item \(\underline{\lambda}^k_i\): the minimum cost from \(i\) to \(d^k\) when \(\Ta = 0, \forall a \in A_1\);
    \item \(\overline{\lambda}^k_j\): the minimum cost from \(j\) to \(d^k\) when \(\Ta = N_a, \forall a \in A_1\);
    \item \(\underline{L}^k\): the minimum cost from \(\ok\) to \(\dk\) when \(\Ta = 0, \forall a \in A_1\).
\end{itemize}

Then, the upper bounds for the big-M parameters are:
\begin{align*}
    R_a^k & = \ca + N_a - \underline{\lambda}^k_i + \overline{\lambda}^k_j &  & a \equiv (i, j) \in A_1, \\
    R_a^k & = \ca - \underline{\lambda}^k_i + \overline{\lambda}^k_j       &  & a \equiv (i, j) \in A_2, \\
    S^k_p & = \suma{} \dap\ca + \suma1 \dap N_a - \underline{L}^k          &  & p \in \Pk.
\end{align*}

In the calculation of \(\overline{\lambda}^k_j\), excluding all the tolled arcs (set \(T = \infty\)) is a valid option,
nevertheless, it may disconnect \(j\) and \(d^k\) which may lead to infinite \(R^K_a\).
Thus, since we aim for the smallest \(R^k_a\), we do not remove them.
A similar technique can be used for the path-arc combination (PACS):
\begin{align}
    \lmk_i - \lmk_j & \geq \ca + \Ta - R_a^k\left(1 - \sump \dap\zpk \right) &  & a \equiv (i, j) \in A_1, \label{lin-cs-pa1} \\
    \lmk_i - \lmk_j & \geq \ca       - R_a^k\left(1 - \sump \dap\zpk \right) &  & a \equiv (i, j) \in A_2. \label{lin-cs-pa2}
\end{align}

For the arc-path combination (VFCS), the constraints have multilinear terms. We will use the expression
\[\suma{} \dap - \suma1 \dap\x - \suma2 \dap\y\]

\noindent as the branching condition. The number of arcs in path \(p\) is \(\suma{} \dap\).
This expression is only equal to 0 if all the arcs along the path \(p\) are active, {\ie}
\(p\) is chosen. The linearized constraints of the arc-path combination (VFCS) are:
\begin{align}
    L^k & \geq \suma{} \dap\ca + \suma1 \dap\Ta - S^k_{\pk}\left(\suma{} \dap - \suma1 \dap\x - \suma2 \dap\y \right) &  & \pk \in \Pk. \label{lin-cs-ap}
\end{align}

After linearizing all the bilinear terms in the complementary slackness constraints, there are still bilinear terms
in the objective function (for the leader's revenue). We could use the direct linearization method as in the strong duality case
which adds more big-M constraints to the formulations. However, we can take advantage of the special structure of the NPP:
the leader's revenue appears twice, once in the objective function, and once in the strong duality constraint. Furthermore, since strong duality constraints are not utilized
in complementary slackness formulations, the leader's revenue can be extracted from these constraints and then be substituted in the objective function.
We will refer to this method as \emph{linearization by substitution}.
Let \(\tauk = \suma1 \Ta\x = \suma1 \sump \dap\Ta\zpk\) be the leader's revenue for commodity \(k\) per unit of demand.
The strong duality constraints are used to extract \(\tauk\):
\begin{align}
    \suma1 \ca\x + \suma2 \ca\y + \tauk & = \lmk_{o^k} - \lmk_{d^k}, \label{lin-subs-sd-aa} \\
    \suma1 \ca\x + \suma2 \ca\y + \tauk & = L^k,                     \label{lin-subs-sd-ap} \\
    \sump \suma{} \dap\ca\zpk + \tauk   & = \lmk_{o^k} - \lmk_{d^k}, \label{lin-subs-sd-pa} \\
    \sump \suma{} \dap\ca\zpk + \tauk   & = L^k.                     \label{lin-subs-sd-pp}
\end{align}

The objective function becomes:
\begin{align}
    \sumk \eta^k \tauk. \label{lin-obj-subs}
\end{align}

Table \ref{tab:content-lin-cs} is the summary of all linearized complementary slackness formulations.
The variables with stars are required to be binary. Since there are two linearization methods for the bilinear terms in the objective function,
a suffix is added to the label, where 1 means direct linearization and 2 means linearization by substitution.
Overall, the linearization of the complementary slackness formulations is more complicated than that of the strong duality formulations.
Complementary slackness formulations usually have more binary variables and more constraints, which may lead to worse performance.

\begin{table}[h]
    \centering
    \begin{tabular}{lllllll}
        \toprule
              &                                 &                        & \multicolumn{4}{c}{Constraints}                                                                                     \\
        \cmidrule(lr){4-7}
        Label & Variables                       & Obj.                   & Primal     & Dual                     & Opt. Cond.                             & Linearization                      \\
        \midrule
        \textbf{Direct linearization}\span\span\span                                                                                                                                           \\
        CS1   & \(T, t, x^*, y^*, \lambda \)    & \eqref{lin-obj-direct} & \eqref{pa} & \eqref{da1}, \eqref{da2} & \eqref{lin-cs-aa1}, \eqref{lin-cs-aa2} & \eqref{directa1}, \eqref{directa2} \\
        VFCS1 & \(T, t, x^*, y^*, L \)          & \eqref{lin-obj-direct} & \eqref{pa} & \eqref{dp}               & \eqref{lin-cs-ap}                      & \eqref{directa1}, \eqref{directa2} \\
        PACS1 & \(T, t, z^*, \lambda \)         & \eqref{lin-obj-direct} & \eqref{pp} & \eqref{da1}, \eqref{da2} & \eqref{lin-cs-pa1}, \eqref{lin-cs-pa2} & \eqref{directp1}, \eqref{directp2} \\
        PCS1  & \(T, t, z^*, L \)               & \eqref{lin-obj-direct} & \eqref{pp} & \eqref{dp}               & \eqref{lin-cs-pp}                      & \eqref{directp1}, \eqref{directp2} \\
        \midrule
        \textbf{Linearization by substitution}\span\span\span                                                                                                                                  \\
        CS2   & \(T, \tau, x^*, y^*, \lambda \) & \eqref{lin-obj-subs}   & \eqref{pa} & \eqref{da1}, \eqref{da2} & \eqref{lin-cs-aa1}, \eqref{lin-cs-aa2} & \eqref{lin-subs-sd-aa}             \\
        VFCS2 & \(T, \tau, x^*, y^*, L \)       & \eqref{lin-obj-subs}   & \eqref{pa} & \eqref{dp}               & \eqref{lin-cs-ap}                      & \eqref{lin-subs-sd-ap}             \\
        PACS2 & \(T, \tau, z^*, \lambda \)      & \eqref{lin-obj-subs}   & \eqref{pp} & \eqref{da1}, \eqref{da2} & \eqref{lin-cs-pa1}, \eqref{lin-cs-pa2} & \eqref{lin-subs-sd-pa}             \\
        PCS2  & \(T, \tau, z^*, L \)            & \eqref{lin-obj-subs}   & \eqref{pp} & \eqref{dp}               & \eqref{lin-cs-pp}                      & \eqref{lin-subs-sd-pp}             \\
        \bottomrule
    \end{tabular}
    \caption{List of variables, constraints, and objective functions of all linearized complementary slackness formulations.}
    \label{tab:content-lin-cs}
\end{table}

Combining with the four strong duality formulations, in total, there are 12 different MILP formulations for the NPP.
Some of them are completely new: (PASTD), (CS), and (VFCS), while
others are already mentioned in other works. Table \ref{tab:prevmodels} shows how these reformulations fit in the big picture.
In the left and the middle columns, the names and the labels used in the original works are listed.
The corresponding labels used in this paper are shown in the right column. We remark that in Bouhtou et al. \cite{bouhtou2007},
the path-based formulation (PMIP) is a path value function formulation (PVF) with a minor modification: the bilinear terms
are linearized separately per path by replacing \(\Tak = \Ta\x\) with \(r^k_{pa} = \Ta\x\zpk\).

\begin{table}
    \centering
    \begin{tabular}{lll}
        \toprule
        Name in original work                          & Original label & General label \\
        \midrule
        \textbf{Brotcorne et al. \cite{brotcorne2001}} &                                \\
        ~~~Mixed integer formulation                   & CPLEX          & STD           \\
        \textbf{Heilporn et al. \cite{heilporn2006}}   &                                \\
        ~~~Mixed integer formulation                   & TP2            & STD           \\
        ~~~New formulation                             & TP3            & VF            \\
        \textbf{Didi-Biha et al. \cite{didibiha2006}}  &                                \\
        ~~~Arc-based formulation                       & MIP I          & STD           \\
        ~~~Arc-path formulation                        & MIP II         & PACS1         \\
        ~~~Path-based formulation                      & MIP III        & PCS2          \\
        \textbf{Bouhtou et al. \cite{bouhtou2007}}     &                                \\
        ~~~Arc-based formulation                       & AMIP           & STD           \\
        ~~~Path-based formulation                      & PMIP           & PVF*          \\
        \textbf{Dewez et al. \cite{dewez2008}}         &                                \\
        ~~~Arc-based formulation                       & TOP-ARCS       & STD           \\
        ~~~Path-based formulation                      & PATH           & PCS2          \\
        \bottomrule
    \end{tabular}
    \caption{The mapping of previous formulations to the general framework.}
    \label{tab:prevmodels}
\end{table}

\section{Path Enumeration and Preprocessing}
This section describes a new path enumeration process in details, combining all the ingredients previously explored
in the literature. Then, we discuss the use of this process as a preprocessing method for arc-based formulations.
The latter will enable us to fairly compare arc and path-based reformulations, since we make preprocessing available for both.

\subsection{Path Enumeration}

Formulations using path representations require the set \(\Pk\), and thus,
the explicit enumeration of all paths from \(\ok\) to \(\dk\).
The size of \(\Pk\) can be exponential. However, not all paths are relevant.
We can eliminate many of them by using the dominance rule in Bouhtou et al. \cite{bouhtou2007}.

\begin{definition}
    \label{def:bifeas}
    Given a commodity \(k\), a path is bilevel feasible if it is optimal in the follower problem for some value of \(T\).
\end{definition}

Mathematically, if \(p\) is bilevel feasible, then there exists \(T\) such that for all \(q \in \Pk\):
\[\suma{} \dap\ca + \suma1 \dap\Ta \leq \suma{} \daq\ca + \suma1 \daq\Ta.\]

\begin{lemma}[Bouhtou et al. \cite{bouhtou2007}]
    \label{lem:dominance}
    Consider any commodity \(k \in K\). Let \(p\) and \(q\) be two different paths in \(\Pk\).
    If for all \(a \in A_1\), \(\dap \leq \daq\) and
    \[\suma{} \dap\ca < \suma{} \daq\ca,\]
    then \(q\) is not bilevel feasible,
    \ie, \(q\) cannot be the optimal path for any value of \(T\).
\end{lemma}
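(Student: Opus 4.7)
The plan is to establish the claim by a direct cost comparison: for every non-negative toll vector $T$, the cost of path $p$ strictly undercuts that of path $q$, so $q$ can never be optimal in the follower problem, and hence cannot be bilevel feasible in the sense of Definition~\ref{def:bifeas}.

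I would begin by writing out the cost of any path $r \in \Pk$ under a given toll vector $T$ as $C_T(r) = \suma{} \delta_a^r \ca + \suma1 \delta_a^r \Ta$, directly from the objective of $\prog[k]{PP}$. Then I would form the difference
\[
C_T(q) - C_T(p) = \suma{}(\daq - \dap)\ca + \suma1 (\daq - \dap)\Ta.
\]
The first step is to argue that the second sum is non-negative for every admissible $T$: by hypothesis $\daq - \dap \geq 0$ for all $a \in A_1$, and in $\prog{NPP}$ the tolls satisfy $\Ta \geq 0$, so each summand is the product of two non-negative quantities. The second step is to note that the first sum is \emph{strictly} positive, which is exactly the hypothesis $\suma{} \dap\ca < \suma{} \daq\ca$ rewritten as $\suma{}(\daq - \dap)\ca > 0$.

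Combining these two facts yields $C_T(q) > C_T(p)$ for every $T \geq 0$. Since $p \in \Pk$ is a feasible competitor to $q$ with strictly lower cost regardless of the toll, $q$ is never an optimal solution of the follower problem, so $q$ is not bilevel feasible, completing the proof.

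I do not anticipate a genuine obstacle here: the argument is purely a sign analysis that leverages non-negativity of $T$ together with the inclusion condition on tolled arcs to handle the toll-dependent part, while the strict inequality is handed to us on the fixed-cost part. The only point worth stating carefully is why we may restrict to $T \geq 0$: this is baked into the leader's feasible region in $\prog{NPP}$, so the comparison need not be carried out for arbitrary real tolls.
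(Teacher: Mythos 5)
Your proof is correct and is essentially the paper's argument in direct (rather than contradiction) form: both rest on the same two observations, that $\dap \leq \daq$ together with $T \geq 0$ makes the toll part of $q$'s cost at least that of $p$'s, while the hypothesis on initial costs makes the fixed part strictly larger, so $C_T(q) > C_T(p)$ for every admissible $T$. No gap; the remark about restricting to $T \geq 0$ is the right point to make explicit.
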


\begin{proof}
    Suppose that \(q\) is the optimal path for some fixed value of \(T\). The follower's cost
    of \(q\) must be minimal:
    \[\suma{} \dap\ca + \suma1 \dap\Ta \geq \suma{} \daq\ca + \suma1 \daq\Ta.\]
    However, because \(\dap \leq \daq\), this means \(\suma1 \dap\Ta \leq \suma1 \daq\Ta\);
    and because \(\suma{} \dap\ca < \suma{} \daq\ca\), this leads to a contradiction.
\end{proof}

The dominance rule in Lemma \ref{lem:dominance} is the only necessary rule to eliminate all non-bilevel-feasible paths.
Any remaining path is bilevel feasible.

\begin{theorem}
    \label{theo:bifeas}
    Any path which is not eliminated by the dominance rule in Lemma \ref{lem:dominance} is bilevel feasible.
\end{theorem}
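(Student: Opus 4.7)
The plan is to establish the theorem constructively: given a path $q \in P^k$ that survives the dominance rule, I will exhibit an explicit toll vector $T \geq 0$ under which $q$ is an optimal path in the follower's problem; by Definition~\ref{def:bifeas}, this proves that $q$ is bilevel feasible.

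The construction is natural: set $T_a = 0$ on every tolled arc $a$ with $\delta_a^q = 1$, and $T_a = M$ on every tolled arc $a$ with $\delta_a^q = 0$, where $M$ is any constant larger than $\sum_{a \in A} \delta_a^q c_a$. Under this choice, the toll contribution along $q$ vanishes and the total cost of $q$ is exactly $\sum_a \delta_a^q c_a$. To verify optimality I compare $q$ with an arbitrary competing $p \in P^k$ using a two-case split on whether $p$ uses a tolled arc outside $q$.

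In the first case, there exists $a \in A_1$ with $\delta_a^p = 1$ and $\delta_a^q = 0$; then $p$ incurs the penalty $M$ at least once, so its total cost is at least $M$, which exceeds the cost of $q$ by the choice of $M$. In the second case, $\delta_a^p \leq \delta_a^q$ for every $a \in A_1$; the toll contribution to $p$'s cost is then also zero, so the comparison reduces to $\sum_a \delta_a^p c_a$ versus $\sum_a \delta_a^q c_a$. This is where the hypothesis enters: by Lemma~\ref{lem:dominance}, no $p$ satisfying $\delta_a^p \leq \delta_a^q$ on $A_1$ can also satisfy $\sum_a \delta_a^p c_a < \sum_a \delta_a^q c_a$, since that would eliminate $q$. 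Hence $\sum_a \delta_a^p c_a \geq \sum_a \delta_a^q c_a$, and the cost of $p$ is at least the cost of $q$.

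There is no serious obstacle; the argument is essentially a contrapositive case analysis against the statement of Lemma~\ref{lem:dominance}. The only subtlety is handling equality in the second case, but a tie simply means $p$ and $q$ are both optimal for the follower, which still certifies that $q$ is an optimal path and hence bilevel feasible. The construction also illustrates why the dominance rule is sharp: its negation for $q$ is precisely what allows the explicit toll vector above to witness $q$'s optimality.
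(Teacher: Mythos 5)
Your proof is correct and follows essentially the same approach as the paper's: both construct the witness toll vector that is zero on the tolled arcs of the surviving path and prohibitively large elsewhere, then invoke the negation of the dominance condition to rule out a cheaper competitor. The only cosmetic differences are your use of a finite big-$M$ in place of the paper's $\infty$ and a direct verification in place of a proof by contradiction.
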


\begin{proof}
    We will prove that if path \(p\) is not eliminated, then it is optimal for the following value of \(T\):
    \begin{equation*}
        T_a = \begin{cases}
            0      & \text{if } \dap = 1, \\
            \infty & \text{otherwise}.
        \end{cases}
    \end{equation*}
    Suppose that \(p\) is not optimal for the above value of \(T\) and let \(q\) be the optimal path.
    Consequently, \(q\) can only use arcs with \(\dap = 1\) because all other tolled arcs are disabled.
    In other words, \(\daq \leq \dap\).
    Also because \(T_a = 0\) for \(\dap = 1\), the costs of \(p\) and \(q\)
    in this case are exactly their initial costs, hence \(\suma{} \daq\ca < \suma{} \dap\ca\).
    By Lemma \ref{lem:dominance}, \(p\) should have been eliminated which is a contradiction.
    Therefore, \(p\) must be optimal for the given value of \(T\).
\end{proof}

Definition \ref{def:bifeas} alone does not ensure that any bilevel feasible path is relevant to solve the NPP.
Suppose we have two different paths with the same initial cost and the same set of tolled arcs.
Then they can be both bilevel feasible. In such case, however, only one path is required.
Thus, we employ Assumption \ref{as:relevant} to make the term ``bilevel feasible'' coincide with the term ``relevant''.

\begin{assumption}
    \label{as:relevant}
    Two different bilevel feasible paths must have different initial costs.
\end{assumption}

This assumption also implies that two different bilevel feasible paths will have different sets of tolled arcs.
This can be fulfilled if we add a minuscule random perturbation to each arc.

Yen's algorithm \cite{yen1971} is usually used to enumerate the set of paths of a commodity.
The algorithm outputs the shortest path, the second shortest path, etc, up to the \(K\)-th shortest path between two nodes in the graph.
In this case, the shortest path is the path with the minimum initial cost (the cost when set \(T = 0\)).
The algorithm should be stopped when it has found the first toll-free path as in the following corollary:

\begin{corollary}
    \label{cor:stop}
    Given a commodity \(k \in K\), a path \(p\) cannot be bilevel feasible if its initial cost is greater than that of the shortest toll-free path \(\pik\).
\end{corollary}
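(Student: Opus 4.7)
The plan is to apply Lemma \ref{lem:dominance} directly, with the shortest toll-free path $\pi^k$ taking the role of the dominating path (called $p$ in the lemma) and the candidate path $p$ taking the role of the dominated path (called $q$ in the lemma). The corollary then becomes an essentially one-line consequence.

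First I would observe that $\pi^k$ is by definition toll-free, so $\delta_a^{\pi^k} = 0$ for every $a \in A_1$. In particular, $\delta_a^{\pi^k} \leq \delta_a^p$ holds trivially on all tolled arcs, verifying the first hypothesis of Lemma \ref{lem:dominance}. Second, the assumption of the corollary is precisely
\[\suma{} \delta_a^{\pi^k} c_a < \suma{} \delta_a^p c_a,\]
which supplies the strict initial-cost inequality required by the lemma. Invoking Lemma \ref{lem:dominance} with these two paths then yields that $p$ is not bilevel feasible, which is exactly the claim.

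There is no real obstacle; the only point requiring care is lining up the notation so that $p$ in the corollary statement matches $q$ in the lemma statement and vice versa. Conceptually, the corollary just says that once we have seen a toll-free path of some cost, every longer candidate can be undercut by it regardless of the tolls the leader chooses, so it cannot ever be optimal for the follower. This also justifies its use as the stopping criterion for Yen's algorithm, since Yen's algorithm enumerates paths in non-decreasing order of initial cost and therefore all subsequent paths will automatically fail the corollary's condition.
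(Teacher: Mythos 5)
Your proof is correct and is exactly the intended derivation: the paper states this as an immediate corollary of Lemma \ref{lem:dominance} (with $\pi^k$ as the dominating path, whose tolled-arc indicators are all zero) and provides no separate proof. Your careful note about swapping the roles of $p$ and $q$ relative to the lemma's statement is the only subtlety, and you have handled it properly.
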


In Bouhtou et al. \cite{bouhtou2007}, the authors reduced the size of the graph by transforming the original graph to the shortest path graph model (SPGM).
In SPGM, all nodes that are not incident to any tolled arc will be removed and the toll-free arcs connecting to it are replaced by
the outer product of the list of incoming arcs and the list of outgoing arcs. Then, some elimination rules (which are only applicable on the SPGM)
are applied to reduce the number of arcs. Finally, the set of paths is enumerated using the Yen's algorithm on the SPGM and the dominance rule is applied afterward.
A drawback of the SPGM is the inflation of the set of arcs which is caused by the replacement of nodes by arcs.
Besides that, the elimination rules in SPGM still leave many redundant nodes and arcs. Thus, there is a sizable number of irrelevant
paths generated by the Yen's algorithm.

In Didi-Biha et al. \cite{didibiha2006}, the authors used a modified version of the Lawler's procedure \cite{lawler1972}, a general form of the Yen's algorithm, to enumerate the set of bilevel feasible paths directly without using the SPGM.
SPGM is not needed here because it is only a method to reduce the size of the graph before Yen's algorithm is applied which is not used by the authors.
Due to Assumption \ref{as:relevant}, only paths with different combinations of tolled arcs are explored,
ignoring most redundant paths. However, the algorithm in \cite{didibiha2006} requires solving a shortest path problem with some arcs fixed to 1
which cannot be accomplished by the Dijsktra's algorithm and requires a linear program solver.
We propose an improved version of this algorithm which just needs a shortest path algorithm.
Algorithm \ref{alg:enum} shows this enumeration process in details. In the algorithm,
\(C\) is the set of candidate paths, \(N\) is the maximum number of paths we want to enumerate,
\(\psj\) is the \(j\)-th shortest path, \(s(\qi)\) is the spur node of path \(\qi\),
and \(R(\qi)\) is the set of excluded tolled arcs of path \(\qi\).

\begin{algorithm}
    \caption{Path enumeration}
    \label{alg:enum}
    \hspace*{\algorithmicindent}\textbf{Input}: The graph \(G = (V, A)\), the O-D pair \(o^k, d^k\), the initial costs \(c_a\), the maximum number of paths to be enumerated \(N\).

    \hspace*{\algorithmicindent}\textbf{Output}: A list of \(N\) paths, most of which are bilevel feasible.

    \begin{algorithmic}
        \STATE Find the first shortest path \(\ps1\)
        \STATE \(C \gets \{\ps1\}\)
        \STATE \(s(\ps1) \gets \ok\)
        \STATE \(R(\ps1) \gets \varnothing\)
        \STATE \(j \gets 1\)
        \WHILE{$j \leq N$}
        \STATE Output the path with minimum cost in \(C\), call it \(\psj\)
        \STATE Remove \(\psj\) from \(C\)
        \IF{$\psj$ has no tolled arcs}
        \STATE Stop the algorithm
        \ENDIF
        \STATE Let \(a_1, a_2, \ldots, a_m\) be the ordered tolled arcs of \(\psj\) from \(s(\psj)\) to \(\dk\)
        \STATE \(\hat{s} \gets s(\psj)\)
        \FOR{$i$ from 1 to $m$}
        \STATE \(\qi \gets \) subpath of \(\psj\) from \(\ok\) to \(\hat{s}\)
        \STATE \(s(\qi) \gets \hat{s}\)
        \STATE \(R(\qi) \gets R(\psj) \cup \{a_i\}\)
        \STATE Remove nodes in \(\qi\) (for this loop only)
        \STATE Remove arcs in \(R(\qi)\) (for this loop only)
        \IF{$\hat{s}$ and $\dk$ are connected}
        \STATE Append to \(\qi\) the shortest path from \(\hat{s}\) to \(\dk\)
        \STATE Add \(\qi\) to \(C\)
        \ENDIF
        \STATE \(\hat{s} \gets\) target of \(a_i\)
        \ENDFOR
        \STATE \(j \gets j + 1\)
        \ENDWHILE
    \end{algorithmic}
\end{algorithm}

Algorithm \ref{alg:enum} is a version of the Lawler's algorithm \cite{lawler1972}
which is an enumeration scheme for binary problems such as the shortest path problem.
Let \(S(\psj)\) be the set of tolled arcs of \(\psj\) from \(\ok\) to \(s(\psj)\).
We can see that \(\psj\) is a solution (not necessarily optimal) of the shortest path problem
when all arcs in \(R(\psj)\) are fixed to 0 and all arcs in \(S(\psj)\) are fixed to 1.
We will call \(\psj\) a solution of the problem \((R(\psj), S(\psj))\).
The Lawler's algorithm tells us that we need to spawn \(n\) subproblems where \(n\) is the number of non-fixed variables.
We can re-order these non-fixed variables in any order. Hence, the variables with value 1 (\(x_{a_1}\) to \(x_{a_m}\)) come first
and the variables with value 0 (\(x_{a_{m+1}}\) to \(x_{a_n}\)) follow.
The former \(m\) variables are the tolled arcs of \(\psj\) from \(s(\psj)\) to \(\dk\)
(all the tolled arcs preceding \(s(\psj)\) are fixed by assumption).
These \(m\) variables must also be in order of appearance as mentioned in Algorithm \ref{alg:enum}.
According to this order, the \(n\) subproblems of the Lawler's algorithm are generated
by fixing all the previously fixed variables with these additional constraints (one constraint for each subproblem):
\begin{align*}
    (1)     &  &  & x_{a_1} = 0                                                                         \\
    (2)     &  &  & x_{a_1} = 1, x_{a_2} = 0                                                            \\
    \vdots~ &  &  & \hspace{1cm} \vdots                                                                 \\
    (m)     &  &  & x_{a_1} = x_{a_2} = \ldots = x_{a_{m-1}} = 1, x_{a_m} = 0                           \\
            &  &  &                                                                                     \\
    (m+1)   &  &  & x_{a_1} = \ldots = x_{a_m} = 1; x_{a_{m+1}} = 1                                     \\
    (m+2)   &  &  & x_{a_1} = \ldots = x_{a_m} = 1; x_{a_{m+1}} = 0, x_{a_{m+2}} = 1                    \\
    \vdots~ &  &  & \hspace{1cm} \vdots                                                                 \\
    (n)     &  &  & x_{a_1} = \ldots = x_{a_m} = 1; x_{a_{m+1}} = \ldots = x_{a_{n-1}} = 0, x_{a_n} = 1
\end{align*}

We can verify that the feasible regions of all subproblems are mutually exclusive and their union
is equivalent to the feasible region of the problem \((R(\psj), S(\psj))\) except \(\psj\).
The subproblems in the second group have a common constraint \(x_{a_1} = \ldots = x_{a_m} = 1\).
However, because of Lemma \ref{lem:dominance}, \(x_{a_1} = \ldots = x_{a_m} = 1\)
implies that the resulting paths cannot be bilevel feasible (dominated by \(\psj\)).
Therefore, we only need to consider the first \(m\) subproblems.

Consider the subproblem \(i\). Let \(\qi\) be the path constructed as in Algorithm \ref{alg:enum}.
We call \(\qi\) a child of \(\psj\). There are two cases: we can construct \(\qi\) and we cannot (there is no path from \(\hat{s}\) to \(\dk\)).
If we can and \(\qi\) exists, then similar to \(\psj\), \(\qi\) is a solution of the problem \((R(\qi), S(\qi))\) where
\(R(\qi) = R(\psj) \cup \{a_i\}\) and \(S(\qi) = S(\psj) \cup \{a_1, \ldots, a_{i-1}\}\).
Although we do not assume that \(\psj\) and \(\qi\) are optimal in their own subproblems,
they satisfy two special properties. First, the costs of \(\psj\) and its child \(\qi\) follow the correct enumeration order:

\begin{lemma}
    \label{lem:order}
    The cost of \(\qi\) is at least the cost of \(\psj\).
\end{lemma}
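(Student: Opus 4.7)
The plan is to split the costs of $\psj$ and $\qi$ at the intermediate node $\hat{s}$ at which they diverge and invoke Bellman's principle of optimality for simple paths. By construction, $\qi$ is the concatenation of the subpath of $\psj$ from $\ok$ to $\hat{s}$ with a shortest simple path from $\hat{s}$ to $\dk$ in the subgraph $G_{\qi}$ obtained by removing the nodes of that prefix (other than $\hat{s}$) and the arcs of $R(\qi) = R(\psj) \cup \{a_i\}$. Hence $\psj$ and $\qi$ share a common prefix of identical cost, and it suffices to compare their suffixes from $\hat{s}$ to $\dk$.

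Next I argue by induction on the recursion depth that the suffix of $\psj$ from its spur node $s(\psj)$ to $\dk$ is a shortest simple path in the subgraph $G_{\psj}$ obtained from $G$ by removing the nodes of the prefix of $\psj$ from $\ok$ to $s(\psj)$ (other than $s(\psj)$) and the arcs of $R(\psj)$. For $\ps{1}$ this is immediate since $\ps{1}$ is the overall shortest $\ok$--$\dk$ path and $s(\ps{1}) = \ok$, $R(\ps{1}) = \varnothing$. For $j \geq 2$, the path $\psj$ was produced when its parent was processed, as the concatenation of the parent's prefix with a shortest path from the parent's spur node to $\dk$ in precisely the restricted subgraph described; this yields the inductive claim for $\psj$ directly.

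Because arc costs $c_a$ are strictly positive, shortest walks in every restricted subgraph coincide with shortest simple paths, so Bellman's principle of optimality applies. In particular, the sub-suffix of $\psj$ from any intermediate node $\hat{s}$ on its suffix down to $\dk$ is itself a shortest simple path in the subgraph obtained from $G_{\psj}$ by additionally deleting the nodes along $\psj$ strictly between $s(\psj)$ and $\hat{s}$. Crucially, this subgraph coincides with $G_{\qi}$ except that $G_{\qi}$ further excludes the arc $a_i$. Deleting an arc can only increase a shortest-path distance, so the cost of $\qi$'s suffix is at least the cost of $\psj$'s sub-suffix from $\hat{s}$ to $\dk$, and adding the common prefix cost yields $\mathrm{cost}(\qi) \geq \mathrm{cost}(\psj)$.

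The main obstacle is the inductive step of the middle paragraph: the suffix constructed inside the algorithm is only guaranteed by Dijkstra to be shortest from the parent's spur node, yet we need it to remain shortest from every later interior node $\hat{s}$ when the traversed nodes are additionally removed. This transfer hinges on positivity of $c_a$ together with the fact that the removed nodes are exactly those already traversed along the suffix, so that any hypothetical shorter sub-suffix would splice back into a shorter path from the parent's spur node and contradict the parent-level shortest-path property.
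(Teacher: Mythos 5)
Your proof is correct and follows essentially the same route as the paper's: split at the common prefix ending at \(s(\qi)\), use subpath optimality to see that \(\psj\)'s suffix from \(s(\qi)\) is shortest under \(\psj\)'s constraints, and observe that \(\qi\)'s suffix solves a strictly more restricted shortest-path problem (larger excluded arc set \(R(\qi)\supset R(\psj)\) and more excluded nodes), so its cost can only be larger. The explicit induction and the positivity/splicing discussion are careful elaborations of what the paper asserts directly from the construction.
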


Hereafter, we will use the term \emph{part} to refer to a subpath of \(\psj\) or \(\qi\) of which one end is either the origin \(\ok\)
or the destination \(\dk\). The term \emph{segment} will refer to any other subpath.

\begin{proof}
    Because \(\psj\) is constructed by Algorithm \ref{alg:enum}, its part from \(s(\psj)\) to \(\dk\)
    is the shortest path excluding arcs in \(R(\psj)\) and all nodes preceding \(s(\psj)\).
    As a result, its part from \(s(\qi)\) to \(\dk\) is also the shortest path with respect to the same set of constraints.
    Now, consider the path \(\qi\). Its part from \(\ok\) to \(s(\qi)\)
    is identical to that of \(\psj\). The other part from \(s(\qi)\) to \(\dk\)
    is the shortest path, while excluding arcs in \(R(\qi)\) and all nodes preceding \(s(\qi)\).
    Given that \(R(\psj) \subset R(\qi)\) and \(s(\psj)\) comes before \(s(\qi)\), the new shortest path problem is a restriction.
    Thus, the cost of this part of \(\qi\) must be at least the cost of the same part of \(\psj\).
\end{proof}

Lemma \ref{lem:order} is fundamental for using Corollary \ref{cor:stop} as the stopping condition.
If the enumeration order is not maintained, then we may stop prematurely after encountering the first toll-free path.
The other property is expressed in the lemma below and it will be used for proving the correction of another pruning condition.

\begin{lemma}
    \label{lem:opttollfree}
    Any toll-free segment of \(\qi\) (or any path generated by the algorithm) is the shortest segment
    while excluding all arcs in \(R(\qi)\) and all the preceding nodes of that segment.
\end{lemma}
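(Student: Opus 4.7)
The plan is to induct on the order in which Algorithm \ref{alg:enum} creates paths. Before the induction we record a structural fact: whenever $s(\psj) \neq \ok$, the arc of $\psj$ immediately preceding $s(\psj)$ is tolled. This follows by a sub-induction along the creation chain, since the spur node is set either by inheritance from the parent (first-child case, where we recurse) or to the target of a tolled arc $a_{i-1}$ which becomes the last arc of the newly inherited prefix. Consequently, any toll-free segment $\sigma$ of a generated path $\qi$ cannot contain $s(\qi)$ as an interior node, nor end at $s(\qi)$, unless $s(\qi) = \ok$; hence $\sigma$ lies entirely inside the suffix of $\qi$ (possibly starting at $s(\qi)$) or entirely inside the prefix with both endpoints strictly before $s(\qi)$.

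The base case is $\ps{1}$, a shortest $\ok$-to-$\dk$ path with $R(\ps{1}) = \varnothing$. By Bellman's optimality principle, every subpath of $\ps{1}$ is a shortest path between its endpoints in $G$, and since $\sigma$ trivially avoids its own preceding nodes, deleting those nodes from the graph does not destroy the optimality of $\sigma$.

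For the inductive step, suppose the statement holds for $\psj$ and let $\qi$ be its $i$-th child. In \emph{Case A}, $\sigma$ lies in the suffix of $\qi$. By Algorithm \ref{alg:enum}, this suffix is a shortest $s(\qi)$-to-$\dk$ path in the graph $G^-$ obtained from $G$ by deleting the arcs in $R(\qi)$ and the strictly preceding prefix nodes. Writing $D(w)$ for the shortest-path distance from $s(\qi)$ to $w$ in $G^-$, the positivity of the $c_a$ together with Bellman's principle gives $D(u)$ and $D(v)$ as the partial suffix costs, so the cost of $\sigma$ equals $D(v) - D(u)$. For any alternative $u$-to-$v$ path $\sigma'$ in $G^-$, the triangle inequality $D(v) \le D(u) + \text{cost}(\sigma')$ yields $\text{cost}(\sigma') \ge \text{cost}(\sigma)$. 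Further restricting the feasible set by excluding $s(\qi)$ and the suffix nodes between $s(\qi)$ and $u$ only raises the optimum, while $\sigma$ itself remains feasible, so $\sigma$ is still optimal under the lemma's stronger exclusion.

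In \emph{Case B}, $\sigma$ lies strictly inside the prefix of $\qi$. Because $\qi$ and $\psj$ agree on this prefix and the arcs immediately around $\sigma$ are identical in the two paths, $\sigma$ is also a maximal toll-free segment of $\psj$. The inductive hypothesis for $\psj$ says that $\sigma$ is the shortest path between its endpoints once we exclude $R(\psj)$ and the preceding nodes of $\sigma$ on $\psj$ (which coincide with those on $\qi$). Passing to $R(\qi) = R(\psj) \cup \{a_i\}$ only strengthens the restriction, and since $a_i$ lies on $\psj$ strictly after $s(\qi)$ and therefore outside $\sigma$, the same monotonicity-of-feasible-sets argument as in Case A preserves optimality. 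The main obstacle throughout is the mismatch between the exclusions actually enforced by the shortest-path subroutine and the larger preceding-nodes exclusion in the statement; the triangle inequality in Case A plus the monotonicity observation closes this gap.
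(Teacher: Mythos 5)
Your proof is correct and follows essentially the same route as the paper's: induction on the order of path creation, splitting each toll-free segment into the prefix case (handled by the inductive hypothesis plus the fact that enlarging $R$ only restricts the feasible set) and the suffix case (handled by sub-optimality of subpaths of the computed shortest suffix plus extending the excluded-node set), with the observation that no toll-free segment straddles the spur node because that node is the target of a tolled arc. Your write-up merely makes explicit a few steps the paper leaves implicit (the sub-induction on spur nodes and the triangle-inequality justification), but the argument is the same.
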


\begin{proof}
    Assume that \(\qi\) is a child of \(\psj\) and the lemma is true for \(\psj\). The part of \(\psj\)
    from \(\ok\) to \(s(\qi)\) is reused for \(\qi\) and does not contain any arc in \(R(\qi)\), so the lemma is true for this part.
    Consider the other part of \(\qi\) from \(s(\qi)\) to \(\dk\).
    By construction in Algorithm \ref{alg:enum}, this part is the shortest path, while excluding arcs in \(R(\qi)\) and nodes preceding \(s(\qi)\).
    As a result, any subpath of this part is also the shortest subpath with respect to the same set of constraints.
    Later, for any toll-free segment in this part, we can extent the set of excluded nodes from the set of nodes that precedes \(s(\qi)\)
    to the set of nodes that precedes that segment. This means that the lemma is also true for this part of \(\qi\).
    Recall that \(s(\qi)\) is the target of some tolled arc, thus there are no toll-free segments containing \(s(\qi)\) in the middle
    and the lemma is true for \(\qi\). To finish the induction proof, we consider the base case of the first shortest path for which the lemma is also true.
\end{proof}

Now, we consider the other case where we cannot construct \(\qi\). This means
\(s(\qi)\) and \(\dk\) are disconnected when we remove arcs in \(R(\qi)\) and nodes preceding \(s(\qi)\).
We will prove that there is no bilevel feasible path in \((R(\qi), S(\qi))\):

\begin{lemma}
    \label{lem:prune}
    If \(\qi\) does not exists, any path in \((R(\qi), S(\qi))\) is not bilevel feasible.
\end{lemma}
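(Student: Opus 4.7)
The plan is to argue by contradiction: suppose $r \in (R(\qi), S(\qi))$ is bilevel feasible and construct a path $r^*$ that strictly dominates $r$ in the sense of Lemma \ref{lem:dominance}, yielding the contradiction.

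First I would establish two preparatory facts. (i) Every path in $(R(\qi), S(\qi))$ passes through $s(\qi)$: by induction on the position of $\psj$ in the enumeration tree, $s(\qi)$ is either $\ok$ (always visited) or the head of a tolled arc forced into $S(\qi)$ (hence on every feasible path). (ii) Writing $r = r_1 \cdot r_2$ at $s(\qi)$, the suffix $r_2$ lies in $G$ minus $R(\qi)$, and the disconnection hypothesis forces $r_2$ to revisit at least one node of $V_0 \setminus \{s(\qi)\}$, where $V_0$ denotes the node set of the prefix $\psj|_{\ok \to s(\qi)}$. Let $w$ be the \emph{last} such node in $r$'s traversal order.

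Next I would propose the shortcut $r^* = \psj|_{\ok \to w} \cdot r|_{w \to \dk}$. Since $\psj|_{\ok \to w} \subseteq V_0$ while $r|_{w \to \dk}$ contains no $V_0$-node other than $w$ (by the choice of $w$), the two pieces meet only at $w$, and $r^*$ is an elementary path from $\ok$ to $\dk$. The tolled arcs of $\psj|_{\ok \to w}$ form the subset $S_w$ of $S(\qi)$ consisting of tolled arcs of $\psj$ preceding $w$; since $S(\qi) \subseteq r \cap A_1$, this yields $r^* \cap A_1 \subseteq r \cap A_1$, which is the first hypothesis of Lemma \ref{lem:dominance}.

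Finally I would compare initial costs. The common suffix $r|_{w \to \dk}$ cancels, reducing the question to comparing $\psj|_{\ok \to w}$ against $r|_{\ok \to w}$; the latter is forced to detour through $s(\qi)$ and must carry every tolled arc of $S(\qi)$, whereas the former contains only the strictly smaller set $S_w$, and by Lemma \ref{lem:opttollfree} each of its toll-free segments is already as short as possible. These pieces combined give an inequality in the right direction, which Assumption \ref{as:relevant} (read through its perturbation interpretation, so that distinct paths have distinct initial costs) sharpens to strict, and Lemma \ref{lem:dominance} delivers the contradiction. The hardest step is precisely this cost comparison: Lemma \ref{lem:opttollfree} certifies only the \emph{local} shortness of individual toll-free segments of $\psj$ and does not immediately compare two entire paths carrying different tolled-arc sets, so the rigorous inequality will likely be obtained by processing the extra tolled arcs in $S(\qi) \setminus S_w$ one segment at a time along $\psj$.
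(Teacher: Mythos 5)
Your overall strategy---splicing $\psj$'s prefix up to the last $V_0$-node $w$ met by the suffix of $r$ onto $r$'s own tail from $w$---is genuinely different from the paper's, and your preparatory facts are sound: every feasible path of $(R(\qi),S(\qi))$ does pass through $s(\qi)$, the disconnection hypothesis does force the suffix to meet $V_0\setminus\{s(\qi)\}$, the spliced path is elementary, and its tolled arcs are contained in those of $r$. The gap is exactly at the step you flag, and it is not a matter of routine bookkeeping: the inequality $c(\psj|_{\ok\to w})\le c(r|_{\ok\to w})$ does not follow from Lemma \ref{lem:opttollfree}, and the ``one segment at a time'' plan cannot be executed as described. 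There are two obstructions. First, $r$ may traverse the arcs of $S(\qi)$ in a different order than $\psj$ does (possible whenever two consecutive arcs of $S(\qi)$ are joined in $\psj$ by a nonempty toll-free segment), so there is no segment-to-segment correspondence between the toll-free pieces of $\psj|_{\ok\to w}$ and the connecting pieces of $r|_{\ok\to w}$. Second, and more fundamentally, Lemma \ref{lem:opttollfree} certifies a toll-free segment of $\psj$ as shortest only against competitors that avoid that segment's \emph{preceding nodes}; a connecting piece of $r$ is free to pass through such a node (provided $r$ has not visited it earlier, which simplicity of $r$ does not rule out since $r$'s prefix need not cover $V_0$), and can therefore be strictly cheaper than the corresponding piece of $\psj$. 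Lemma \ref{lem:opttollfree} gives no upper bound on $c(\psj|_{\ok\to w})$ against such competitors, so nothing prevents $r|_{\ok\to w}$ from being cheaper than $\psj|_{\ok\to w}$---it merely carries extra tolled arcs, which your dominance argument discards rather than charges for.

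The paper's proof is engineered precisely around this second obstruction. It sorts $S(\qi)$ in $\psj$-order, attempts to replace a single toll-free inter-arc segment of $\hqi$ by the corresponding segment of $\psj$, and, when no replacement is possible, examines the \emph{first} index at which the two paths diverge: up to that point the paths are identical, so simplicity of $\hqi$ forces its competing segment to avoid exactly the preceding nodes required by Lemma \ref{lem:opttollfree}, the lemma applies, and a contradiction follows. Any repair of your global splice would need to localize the cost comparison in the same way, which essentially collapses back to the paper's argument.
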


\begin{proof}
    If the problem \((R(\qi), S(\qi))\) is infeasible, then the above statement is trivially true. Suppose it
    has a feasible solution \(\hqi\) which is a simple path (path with no loops).
    We will construct another path (not necessarily a solution of \((R(\qi), S(\qi))\))
    which dominates \(\hqi\) by replacing a segment in \(\hqi\) by a shorter segment in \(\psj\). First, we need to
    sort the arcs in \(S(\qi)\) by the order of appearance in \(\psj\):
    \begin{equation*}
        \ha_1, \ha_2, \ha_3, \ldots, \ha_{n-1}, \ha_n
    \end{equation*}
    where \(n = |S(\qi)|\). We also define \(\ha_0\) as a virtual arc whose target is \(\ok\) (its source is not relevant).
    Next, for \(0 \leq i < n\), we will try to replace the segment of \(\hqi\) between \(\ha_i\) and \(\ha_{i+1}\) with
    the same segment of \(\psj\) if \(\ha_{i+1}\) comes after \(\ha_i\) in \(\hqi\) and if the latter segment has smaller cost.
    If we are able to make such a replacement, due to the definition of \(S(\qi)\),
    the segment connecting \(\ha_i\) and \(\ha_{i+1}\) in \(\psj\) is toll-free; hence, we only remove tolled arcs while building a better path,
    and thus the resulting path dominates \(\hqi\). If we cannot replace any segment, this means that either all segments in \(\hqi\) are identical to those in \(\psj\)
    or there is a segment in \(\hqi\) with smaller cost.

    Consider the first case: all segments in \(\hqi\) are identical to those in \(\psj\). In this case, the parts of \(\hqi\) and \(\psj\)
    from \(\ok\) to \(s(\qi)\) are identical. The other part of \(\hqi\) must be a path from \(s(\qi)\) to \(\dk\). Because we assume
    that \(\hqi\) is a simple path, we must exclude all nodes preceding \(s(\qi)\). However, since we also assume that \(\qi\)
    does not exist, this implies \(s(\qi)\) and \(\dk\) to be disconnected if we exclude those nodes. This is a contradiction.

    Consider the second case: there is a segment in \(\hqi\) with smaller cost. Let \(\ha_m\) be the first arc such that the segment of \(\hqi\)
    from \(\ha_m\) to \(\ha_{m+1}\) is cheaper than its counterpart of \(\psj\). All segments preceding \(\ha_m\) in both \(\hqi\) and \(\psj\) must be identical
    since we assume that we cannot replace any segment. Consequently, the part from \(\ok\) to \(\ha_m\) of both paths are the same.
    By Lemma \ref{lem:opttollfree}, the segment of \(\psj\) from \(\ha_m\) to \(\ha_{m+1}\) is the optimal path while excluding all preceding nodes.
    However, the same segment of \(\hqi\) is better, which means it must violate that constraint and must repeat some node preceding \(\ha_m\).
    This is again a contradiction as we assumed that \(\hqi\) is a simple path.

    Since both cases result in a contradiction, we can always replace some segment in \(\hqi\) with a better segment in \(\psj\).
    Therefore, \(\hqi\) cannot be bilevel feasible.
\end{proof}

\begin{theorem}
    Given a sufficiently large value of \(N\), the output of Algorithm \ref{alg:enum} includes all bilevel feasible paths.
\end{theorem}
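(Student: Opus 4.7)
The plan is to view Algorithm \ref{alg:enum} as a best-first traversal of a Lawler branching tree over simple $\ok$-$\dk$ paths, and to argue, via a maintained invariant, that every bilevel feasible path lies in the subproblem of some currently enqueued candidate, hence must eventually be output before the toll-free stopping criterion fires. Each node of the tree is labelled by a pair $(R, S)$ of excluded and forced tolled arcs together with a representative path; the root is $(\varnothing, \varnothing)$ with representative $p^{(1)}$. Given a node $\psj$ with ordered tolled arcs $a_1, \ldots, a_m$ from $s(\psj)$ to $\dk$, the discussion in the text exhibits $n$ Lawler subproblems whose feasible regions partition $(R(\psj), S(\psj)) \setminus \{\psj\}$. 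I would argue that the ``second group'' of $n-m$ subproblems, those sharing the constraint $x_{a_1} = \cdots = x_{a_m} = 1$, contains no bilevel feasible path distinct from $\psj$: any such path $p$ uses every tolled arc of $\psj$, so $\dap \leq \delta_a^p$ for all $a \in A_1$, and Assumption \ref{as:relevant} makes the initial-cost inequality strict, triggering Lemma \ref{lem:dominance}. The first group of $m$ subproblems is either explored, in which case a child $\qi$ is added to $C$, or pruned exactly when $\hat{s}$ and $\dk$ become disconnected, in which case Lemma \ref{lem:prune} guarantees the absence of bilevel feasible paths in that subproblem. These two observations together yield the invariant: after every iteration, each bilevel feasible path not yet output lies in the subproblem of some candidate in $C$.

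With the invariant in place, the remainder is bookkeeping. Fix a bilevel feasible $p^*$. As long as it has not been output, there is some candidate in $C$ whose subproblem contains it; since there are only finitely many simple paths and therefore finitely many reachable labels $(R, S)$, for $N$ large enough the algorithm either outputs $p^*$ directly or halts on a toll-free path $\psj$. I would close the proof by showing that the stopping rule is safe: Lemma \ref{lem:order} makes the sequence of output costs non-decreasing, so when a toll-free $\psj$ is emitted, every candidate in $C$ has initial cost at least that of $\psj$; applying Lemma \ref{lem:order} inductively down the yet-unexplored branches, every path reachable beneath those candidates also has initial cost at least that of $\psj$. Corollary \ref{cor:stop} then forbids any such path from being bilevel feasible, so $p^*$ must already have appeared among $p^{(1)}, \ldots, \psj$.

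The hardest part, I expect, will be the second-group argument, since it combines the combinatorics of Lawler branching with the domination structure of Lemma \ref{lem:dominance} and relies crucially on Assumption \ref{as:relevant} to rule out ties in initial cost. The pruning step via Lemma \ref{lem:prune} and the final stopping argument via Corollary \ref{cor:stop} combined with Lemma \ref{lem:order} feel more routine, amounting to inductions that the lemmas already essentially prepared.
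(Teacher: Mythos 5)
Your proposal is correct and follows essentially the same route as the paper's proof: both rest on the facts that the Lawler subproblems cover the whole feasible region except the output path, that the only pruned subproblems are the second group (eliminated via Lemma \ref{lem:dominance}) and the disconnected children (eliminated via Lemma \ref{lem:prune}), and that the toll-free stopping rule is safe by combining Lemma \ref{lem:order} with Corollary \ref{cor:stop}. Your invariant-based phrasing and the explicit appeal to Assumption \ref{as:relevant} in the second-group argument are just a more detailed write-up of the same argument the paper gives in the surrounding text and in its terse proof.
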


\begin{proof}
    At any step of the algorithm, the subproblems generated by the Lawler's procedure
    do not remove any part of the feasible region of the original problem.
    The subproblems are mutually exclusive and there is a finite number of paths,
    thus the algorithm is guaranteed to stop in finite time.
    A subproblem \(i\) is pruned if and only if it is in the second group of subproblems or if \(\qi\) does not exists.
    Both cases imply that all solutions in the feasible region of the subproblem are not bilevel feasible (Lemma \ref{lem:prune}).
    Besides that, the stopping condition in Corollary \ref{cor:stop} is ensured by Lemma \ref{lem:order}.
    Therefore, no bilevel feasible solution is ruled out.
\end{proof}

Although the set of paths enumerated in Algorithm \ref{alg:enum} is close to the final set of bilevel feasible paths,
there are still redundant paths. To obtain the final set of bilevel feasible paths, the dominance rule in Lemma \ref{lem:dominance}
still needs to be applied once more.

\begin{example}
    Consider the graph in Figure \ref{fig:counter}. The number of each arc represents the initial cost \(\ca\).
    The first shortest path is \(\ok - u - v - \dk\) with the cost of 3. By Algorithm \ref{alg:enum}, three subproblems are generated, each producing a candidate path.
    They are the second shortest path \(\ok - u - \dk\) (cost 4), the third shortest path \(\ok - u - v - w - \dk\) (cost 6), and
    the toll-free path \(\ok - \dk\) (cost 10). All 4 paths will be returned, however, the path \(\ok - u - \dk\) dominates \(\ok - u - v - w - \dk\).
    The final set of bilevel feasible paths only has 3 paths: \(\ok - u - v - \dk\), \(\ok - u - \dk\), and \(\ok - \dk\).
\end{example}

\begin{figure}
    \centering
    \begin{tikzpicture}[scale=2]
        \tikzstyle{vertex}=[circle,draw,minimum size=20pt,inner sep=0pt]
        \tikzstyle{tolled}=[->, >=latex, dashed]
        \tikzstyle{tollfree}=[->, >=latex]

        \node[vertex] (o) at (0, 0) {$\ok$};
        \node[vertex] (u) at (1, 0) {$u$};
        \node[vertex] (v) at (2, 0) {$v$};
        \node[vertex] (w) at (2.5, -0.8) {$w$};
        \node[vertex] (d) at (3, 0) {$\dk$};

        \draw[tolled] (o) edge node[below]{1} (u) (u) edge node[below]{1} (v) (v) edge node[below]{1} (d);
        \draw[tollfree] (v) edge node[left]{2} (w) (w) edge node[right]{2} (d);
        \draw[tollfree, bend left=45] (o) edge node[above]{10} (d);
        \draw[tollfree, bend left=30] (u) edge node[above]{3} (d);
    \end{tikzpicture}
    \caption{Graph with redundant paths (dashed arcs are tolled arcs).}
    \label{fig:counter}
\end{figure}
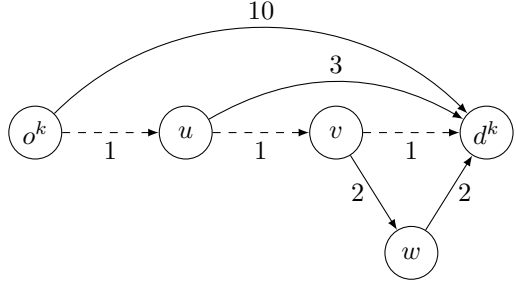

\subsection{Path-based Preprocessing}
In \cite{didibiha2006, bouhtou2007}, the authors compare their proposed path-based formulations
to an unprocessed or SPGM-processed standard formulation. In these comparisons,
path-based formulations are more advantageous compared to their counterparts.
Although SPGM can reduce the complexity of a graph to some degree,
path-based formulations essentially bypass all the redundant arcs and nodes in the graph,
hence they have more preprocessing power.
However, we could harness this preprocessing power of path enumeration and apply it to arc-based formulations.
We propose a new preprocessing method which uses the set of enumerated paths to remove all redundant arcs and nodes completely.
The idea is simple: given the set of bilevel feasible paths, any arc or node which does not appear in any path will be removed.
Then, to further reduce the size of the graph, chains of toll-free arcs will be replaced with a single toll-free arc with cost
equal to the sum of the costs of the associated arcs.
This preprocessing method will let the arc-based formulations use the information generated from the
path enumeration process and enable us to compare all the formulations in a fair manner.

In addition, SPGM can still be applied on top of the processed graph. This does not remove any additional tolled arcs
because the set of tolled arcs obtained from path-based preprocessing is already minimal. However, SPGM transforms
the graph by replacing nodes with toll-free arcs, which can sometimes reduce both the number of nodes and the number of toll-free arcs
in sparse graphs.

As mentioned in Section 2.3, the conversions used in the (PACS) and the (VFCS) formulations must cover all solutions in the primal representation.
When we apply the path-based preprocessing to the arc representation and use the set of bilevel feasible paths for
the path representation, the (PACS) formulation still satisfies this rule. This is because the set of primal solutions of (PACS)
is the set of bilevel feasible paths, all of them can be converted to arcs in the processed graph.
On the other hand, the (VFCS) formulation does not satisfy this rule anymore. In the processed graph, there are many paths
which are not bilevel feasible. If these paths are not covered in the (VFCS) formulation, the optimality condition can be bypassed
by selecting these paths as the primal solutions. However, enumerating all paths (including non-bilevel-feasible) in the processed graph would be too costly.
Therefore, a special treatment is neccesary and we propose a cutting-plane method just for (VFCS):
the processed graph and the set of bilevel feasible paths will still be used to generate the initial set of constraints.
Then, when we encounter a solution, we will examine if the path in that solution is a bilevel feasible path.
If it is not, we will use it to generate an additional complementary slackness constraint to cover the new path and continue solving.

\section{Hybrid Model for Multi-Commodity Problems}

Although path-based formulations and preprocessing can give the MIP solver a boost by removing redundant variables,
they require time to enumerate the paths. Since the number of paths can be exponential,
sometimes, it is faster to just solve the problem without relying on path enumeration (unprocessed or SPGM-processed STD or CS).
However, it is unknown which option is better until the enumeration process is finished.
Instead of a full enumeration, we could enumerate until a predetermined number of paths and then
decide if it is worth to continue enumerating or to cancel the enumeration process and fallback to a formulation
which does not require path enumeration. Since we have a multi-commodity problem,
this probing method can be applied for each commodity separately. The end result is a hybrid model,
assigning different formulations and preprocessing for different commodities depending on their numbers of paths.
This process is shown in Algorithm \ref{alg:hybrid}. In the algorithm, \(N\) is called the breakpoint, which
is a threshold to decide whether the enumeration process should be continued. The commodities with only 1 path
are redundant, because the only path must be a toll-free path which does not bring any profit to the leader.
Commodities with less than \(N\) paths are assigned to a path-based formulation or an arc-based formulation
with path preprocessing, while commodities with more than \(N\) paths are assigned to a fallback formulation
which does not need path enumeration. All variables are managed independently according to their corresponding commodities,
except for \(\Ta\) which is shared.

\begin{algorithm}
    \caption{Hybrid model}
    \label{alg:hybrid}
    \hspace*{\algorithmicindent}\textbf{Input}: The graph \(G = (V, A)\) with costs \(\ca\), all commodities \((\eta^k, \ok, \dk)\).

    \hspace*{\algorithmicindent}\textbf{Output}: A hybrid model \((obj, constraints)\).

    \begin{algorithmic}
        \STATE \(obj \gets 0\)
        \STATE \(constraints \gets \varnothing\)
        \FOR{$k \in K$}
        \STATE Enumerate the first \(N + 1\) paths of commodity \(k\)
        \STATE Let \(\hat{P}^k\) be the set of enumerated paths
        \IF{$|\hat{P}^k| > 1$}
        \IF{$|\hat{P}^k| \leq N$}
        \STATE Assign a formulation \(F^k\) which is either path-based or arc-based with path-based preprocessing
        \ELSE
        \STATE Assign an arc-based formulation \(F^k\) without path-based preprocessing
        \ENDIF
        \STATE \(obj \gets obj + obj(F^k)\)
        \STATE \(constraints \gets constraints \cup constraints(F^k)\)
        \ENDIF
        \ENDFOR
        \STATE Use a solver to solve \((obj, constraints)\)
    \end{algorithmic}
\end{algorithm}

As an example, suppose we divide the set of commodities \(K\) into 3 parts based on the number of paths: \(K_1\) for
commodities having only 1 path, \(K_2\) for commodities with less than \(N\) paths, and \(K_3\) for
commodities with more than \(N\) paths. If we assign (PCS2) to \(K_2\) and (STD) to \(K_3\), then we get the following hybrid model:

\begin{align*}
    \max\  & \sum_{k \in K_2} \eta^k \tau^k + \sum_{k \in K_3} \suma1 \eta^k \Tak                                  \\
    \st    & \eqref{pp}, \eqref{dp}                                                   &  & k \in K_2,              \\
           & L^k \geq \suma{} \dap\ca + \suma1 \dap\Ta - S^k_{\pk}(1 - \zpk)          &  & k \in K_2, \pk \in \Pk, \\
           & \sump \suma{} \dap\ca\zpk + \tau^k = L^k                                 &  & k \in K_2,              \\
           & \zpk \in \{0, 1\}                                                        &  & k \in K_2, \pk \in \Pk, \\
    \\
           & \eqref{pa}, \eqref{da1}, \eqref{da2}, \eqref{directa1}, \eqref{directa2} &  & k \in K_3,              \\
           & \suma1 (\ca\x+\Tak) + \suma2 \ca\y = \lmk_{o^k} - \lmk_{d^k}             &  & k \in K_3,              \\
           & \x \in \{0, 1\}                                                          &  & k \in K_3, a \in A_1.
\end{align*}

The hybrid model can be extended to have multiple breakpoints and different assignment schemes.
For example, path-based formulations are less effective for commodities which have many paths
because the more paths a commodity has, the more variables and constraints are added.
In contrast, arc-based formulations with path preprocessing only remove arcs and nodes
which guarantee complexity reduction. We could use the hybrid model to balance between these two kinds of formulations,
by assigning path-based formulations to commodities with few paths
and arc-based formulations for those with many paths.

\section{Experiments}
We conducted computational experiments to  evaluate in practice the reformulations
and preprocessing presented in the previous sections.
In Section 5.1, the experimental methodology is described,
including the implementation details, the generation and properties of the instances used.
Section 5.2 provides a comparison between the 12 reformulations listed in Section 2.
Section 5.3 validates the efficiency of the new path-based preprocessing.
In Section 5.4, we justify the use of the hybrid model introduced in Section 4.

\subsection{Methodology}
The instances used in the tests is randomly generated. We employ a generation method similar to the one described in \cite{brotcorne2000}
to make the problems challenging. For a given graph and a set of O-D pairs, first, the shortest path of each commodity is found.
Next, we count the number of paths passing through each arc and sort them in the descending order of that count.
Following that order, each arc is converted into a tolled arc until 2/3 of the desired number of tolled arcs is reached.
The last 1/3 is selected randomly among all remaining arcs.
An arc is converted only if it does not remove the last toll-free path for all commodities.
To make the generated data more realistic, we also enforce the properties of the arc to be symmetrical,
which means that any arc and its reversed arc will have the same cost, and both must be either tolled or toll-free.
The cost of 80\% of all arcs will be distributed uniformly from 5 to 35, while the remaining 20\% will have the maximum cost of 35.
The cost of tolled arcs are halved after the conversion. The proportion of tolled arcs is 20\%.

There are 200 generated problem instances (or in short, problems) divided into four sets which are summarized in Table \ref{tab:dataclass}.
Each set has 50 problems with 5 different numbers of commodities: 30, 35, 40, 45, and 50 (10 problems generated for each value).
The topologies are inspired from \cite{brotcorne2008} and are illustrated in Figure \ref{fig:topo}.
The set G provides a dataset similar to the one used in \cite{brotcorne2000}
and \cite{brotcorne2001}, while the three other sets have more paths and are more challenging to solve.
The cumulative distribution of the number of bilevel feasible paths of each set is shown in Figure \ref{fig:numpathsdata}.
The horizontal axis represents the breakpoint \(N\) while the vertical axis shows the proportion of commodities
with no more than \(N\) bilevel feasible paths.

\begin{table}
    \centering
    \begin{tabular}{lllrr}
        \toprule
        Label & Topology & Dimensions             & \(|V|\) & Avg. \(|A|\) \\
        \midrule
        G     & Grid     & \(5 \times 12\) nodes  & 60      & 206          \\
        H     & Grid     & \(12 \times 12\) nodes & 144     & 528          \\
        D     & Delaunay & -                      & 144     & 832          \\
        V     & Voronoi  & -                      & 144     & 410          \\
        \bottomrule
    \end{tabular}
    \caption{Properties of generated data.}
    \label{tab:dataclass}
\end{table}

\begin{figure}
    \centering
    \input{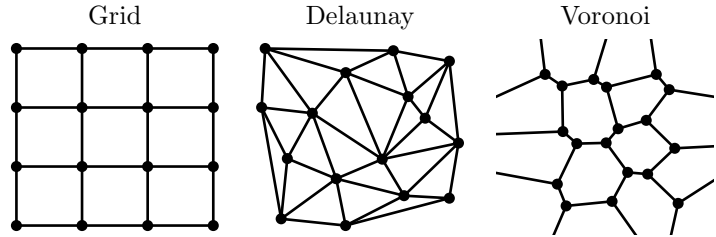}
    \caption{Illustration of the three types of topology.}
    \label{fig:topo}
\end{figure}

\begin{figure}
    \centering
    \input{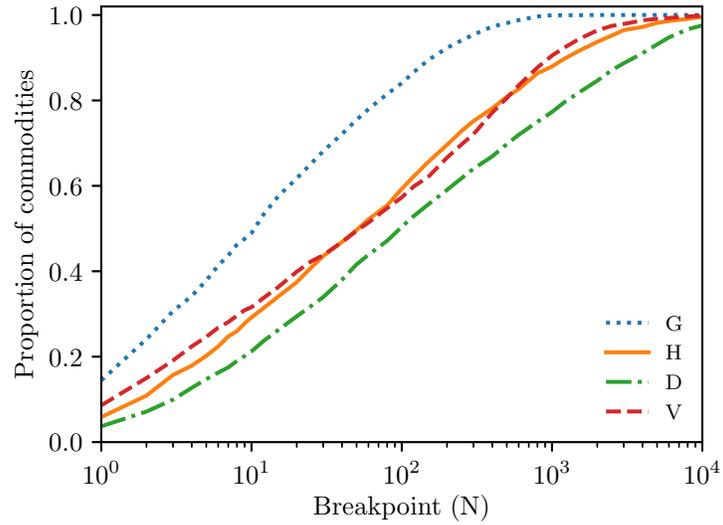}
    \caption{Cumulative distribution of the number of bilevel feasible paths.}
    \label{fig:numpathsdata}
\end{figure}

Our code is implemented in C++.
All test runs are executed single-threaded on the B{\'e}luga cluster (Intel Xeon 2.4 GHz) under Linux.
All tested models are instantiations of the hybrid model described in Algorithm \ref{alg:hybrid}.
Given a problem, the path enumeration will be applied first, then
a formulation will be assigned to each commodity and the single-level reformulation
is synthesized. Finally, the single-level reformulation is solved directly by CPLEX 12.9.
If a model does not rely on path enumeration, the breakpoint \(N\) is set to 1 and path enumeration
is still applied. This will remove the commodities with only 1 path and provide basic preprocessing for all models.
The main formulation will be assigned for the commodities with less than \(N\) bilevel feasible paths.
The default fallback formulation for commodities with more than \(N\) paths is the (STD) with no preprocessing.
By increasing \(N\), we essentially replace the fallback formulation with the main formulation. If the main formulation is better,
then we should see an improvement in performance when increasing \(N\) and vice versa.
Some formulations are only good up to a certain breakpoint,
which can also be observed if there is a change in the direction of their performance over \(N\).
	
We imposed a time limit of 1 hour for each problem.
The time for path enumeration is included in the total time of a test run.
If the run exceeds 1 hour, it is stopped and the optimality gap is recorded.
The problems are divided into two groups: easy and hard. The easy group consists of 108
problems which can be solved by at least one run. All the remaining 92 problems form the hard group, for which an optimal solution is not available. When we compare the performance of any two models,
we are interested in three criteria: the number of problems solved by each model, the average time it takes to solve the easy group,
and the average optimality gap of the the hard group.

\subsection{Comparison of all Formulations}
In this section, we compare the performance of all 12 formulations.
These will be assigned as the main formulations of the hybrid models.
If the main formulation is arc-based, then path-based preprocessing is applied.
If the main formulation is a mixed arc-path or path-arc formulation, then we use path-based preprocessing
on the arc representation. If the main formulation is a path-based formulation,
no further preprocessing is applied. In all cases, path enumeration is required for the main formulations.

Table \ref{tab:models} shows the performance of all models over \(N\).
At first look, we can see that formulations with suffix 1 worsen over \(N\).
This means they are even worse than the fallback formulation.
These are the formulations with complementary slackness as optimality condition and direct linearization.
The reason may be because both complementary slackness and direct linearization use big-M constraints.
All other formulations perform roughly the same (except for (VFCS2)),
with the standard formulation taking the lead in all three criteria.

Figure \ref{fig:models} plots the performance over \(N\) of four notable formulations: (STD), (VF), (CS2), and (PCS2).
(STD), (VF), and (CS2) represent three basic paradigms to convert a bilevel linear problem to a single-level reformulation.
(PCS2) is a path-based formulation which is mentioned in \cite{didibiha2006} and \cite{dewez2008}. In these papers, their experiments suggest that
(PCS2) outperforms (STD), but this comparison is done when the (STD) formulation is preprocessed by the SPGM method.
Here, we observe the opposite result: the standard formulation outperformed (PCS2). The key is the new path-based preprocessing, which
theoretically provides arc-based formulations with the same preprocessing power of the path enumeration, thus it levels the playing field.
Figure \ref{fig:models} also shows a drawback of path-based formulations: the larger \(N\) is, the more complex they become.
(PCS2) must add an extra binary variable for every enumerated path, while (STD) and (CS2) use the set of paths to eliminate arcs and nodes.
Thus, the complexity of (PCS2) increases over \(N\) while the complexity of the other two formulations is capped by its unprocessed version.
In the graph, we can see that the performance of (PCS2) starts dropping after \(N = 500\).
The other formulations using primal-path representation ((PASTD), (PVF), (PACS)) also suffer from this problem.
(VF) uses dual-path, so it only adds more constraints instead of binary variables which allows it to continue improving over \(N\).

\begin{table}
    \small
    \centering
    \begin{subtable}[h]{\textwidth}
        \centering
        \begin{tabular}{lrrrrrrrrrr}
            \toprule
            \(N\) & 10 & 20 & 50 & 100 & 200 & 500 & 1000 \\
            \midrule
            std   & 61 & 64 & 69 & 72  & 78  & 92  & 94   \\
            vf    & 59 & 64 & 68 & 68  & 75  & 90  & 93   \\
            pastd & 56 & 64 & 68 & 71  & 76  & 86  & 85   \\
            pvf   & 59 & 64 & 67 & 69  & 79  & 80  & 81   \\
            cs1   & 47 & 32 & 8  & 1   & 0   & 0   & 0    \\
            cs2   & 58 & 62 & 67 & 67  & 70  & 71  & 73   \\
            pacs1 & 49 & 33 & 8  & 1   & 0   & 0   & 0    \\
            pacs2 & 58 & 62 & 63 & 68  & 70  & 76  & 69   \\
            vfcs1 & 43 & 10 & 0  & 0   & 0   & 0   & 0    \\
            vfcs2 & 56 & 55 & 44 & 36  & 29  & 23  & 21   \\
            pcs1  & 48 & 33 & 9  & 2   & 0   & 0   & 0    \\
            pcs2  & 61 & 61 & 72 & 71  & 77  & 80  & 79   \\
            \bottomrule
        \end{tabular}
        \caption{Number of problems solved}
    \end{subtable} \\

    \begin{subtable}[h]{\textwidth}
        \centering
        \begin{tabular}{lrrrrrrrrrr}
            \toprule
            \(N\) & 10   & 20   & 50   & 100  & 200  & 500  & 1000 \\
            \midrule
            std   & 1999 & 1864 & 1739 & 1604 & 1430 & 1127 & 1029 \\
            vf    & 2022 & 1870 & 1730 & 1644 & 1520 & 1231 & 1155 \\
            pastd & 2056 & 1863 & 1724 & 1662 & 1498 & 1315 & 1220 \\
            pvf   & 1980 & 1867 & 1770 & 1699 & 1486 & 1409 & 1429 \\
            cs1   & 2396 & 2915 & 3495 & 3585 & 3601 & 3601 & 3601 \\
            cs2   & 2069 & 1973 & 1787 & 1822 & 1697 & 1599 & 1518 \\
            pacs1 & 2566 & 3368 & 3601 & 3601 & 3601 & 3601 & 3601 \\
            pacs2 & 2135 & 2189 & 2438 & 2687 & 2890 & 2986 & 3050 \\
            vfcs1 & 2359 & 2818 & 3491 & 3587 & 3601 & 3601 & 3601 \\
            vfcs2 & 1992 & 1916 & 1852 & 1712 & 1673 & 1572 & 1676 \\
            pcs1  & 2351 & 2861 & 3448 & 3576 & 3601 & 3601 & 3601 \\
            pcs2  & 2012 & 1903 & 1696 & 1599 & 1544 & 1529 & 1607 \\
            \bottomrule
        \end{tabular}
        \caption{Time of the easy group}
    \end{subtable} \\

    \begin{subtable}[h]{\textwidth}
        \centering
        \begin{tabular}{lrrrrrrrrrr}
            \toprule
            \(N\) & 10   & 20   & 50   & 100  & 200  & 500   & 1000  \\
            \midrule
            std   & 12.8 & 12.7 & 11.9 & 11.3 & 10.5 & 9.3   & 8.5   \\
            vf    & 13.3 & 12.6 & 12.1 & 11.4 & 11.2 & 10.0  & 9.8   \\
            pastd & 13.2 & 12.8 & 11.7 & 11.7 & 10.8 & 9.7   & 9.2   \\
            pvf   & 13.6 & 12.7 & 12.1 & 11.7 & 11.1 & 10.1  & 10.6  \\
            cs1   & 16.3 & 21.8 & 42.1 & 70.0 & 94.2 & 99.6  & 100.0 \\
            cs2   & 13.3 & 12.9 & 12.8 & 12.2 & 11.9 & 10.9  & 10.9  \\
            pacs1 & 18.9 & 34.4 & 70.3 & 92.6 & 99.1 & 100.0 & 100.0 \\
            pacs2 & 13.7 & 13.4 & 14.6 & 15.1 & 15.5 & 17.1  & 19.2  \\
            vfcs1 & 16.3 & 20.3 & 36.3 & 58.2 & 86.2 & 97.7  & 99.5  \\
            vfcs2 & 13.6 & 13.5 & 13.5 & 13.3 & 12.8 & 12.1  & 12.1  \\
            pcs1  & 15.6 & 20.3 & 39.5 & 63.2 & 88.4 & 98.7  & 99.9  \\
            pcs2  & 13.2 & 13.3 & 12.4 & 11.8 & 10.8 & 10.6  & 11.1  \\
            \bottomrule
        \end{tabular}
        \caption{Gap of the hard group}
    \end{subtable}

    \caption{Performance of all formulations over \(N\).}
    \label{tab:models}
\end{table}

\begin{figure}
    \centering
    \input{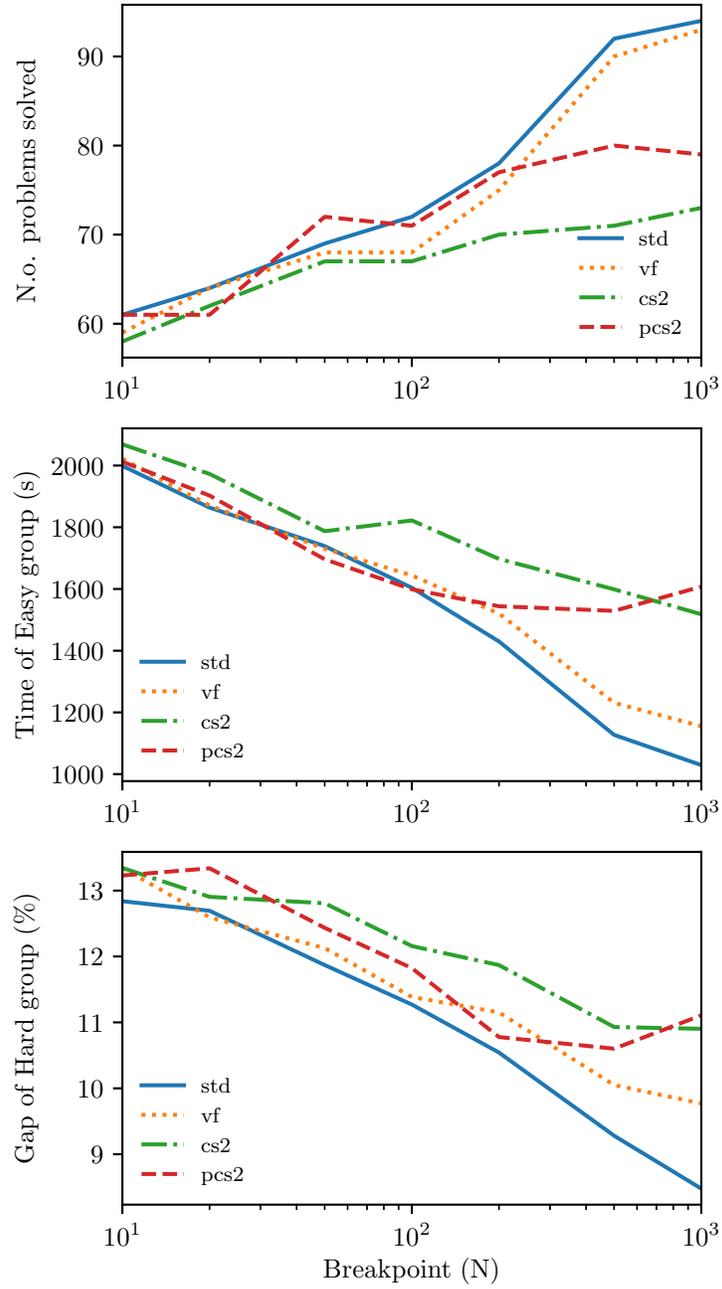}
    \caption{Performance of notable formulations over \(N\).}
    \label{fig:models}
\end{figure}

\subsection{Path-based Preprocessing}
In this section, we compare the new path-based preprocessing with the state-of-the-art preprocessing method (SPGM). Figure \ref{fig:reduceratio} shows the reduction ratio between those two
preprocessing methods. Given a breakpoint \(N\) (horizontal axis), the reduction ratio is the quotient of
the total number of nodes/arcs/tolled arcs of all commodities with no more than \(N\) bilevel feasible paths
after being processed by the path-based preprocessing over the same sum in the original graph (absolute ratio)
or in the SPGM-processed graph (relative ratio).
Each criterion (node, arc, tolled arc) has a different impact on the final single-level reformulation, which
is summarized in Table \ref{tab:criteria}. The tolled arc criterion is the most important because
it is linked with the number of binary variables in the final formulation. In Figure \ref{fig:reduceratio},
we can see that the path-based preprocessing excels in all three criteria. On average, graphs processed by
the path-based preprocessing have 10\% less nodes, 66\% less arcs, and 49\% less tolled arcs compare to the SPGM method.
Compared to the original graph, the path-based preprocessing removes 75\% of all tolled arcs, which reduces the size
of the problems significantly. The efficiency varies between different instance sets, with the path-based preprocessing being more favorable
in more connected graphs such as in dataset D and H.

\begin{figure}
    \centering
    \makebox[\textwidth][c]{\input{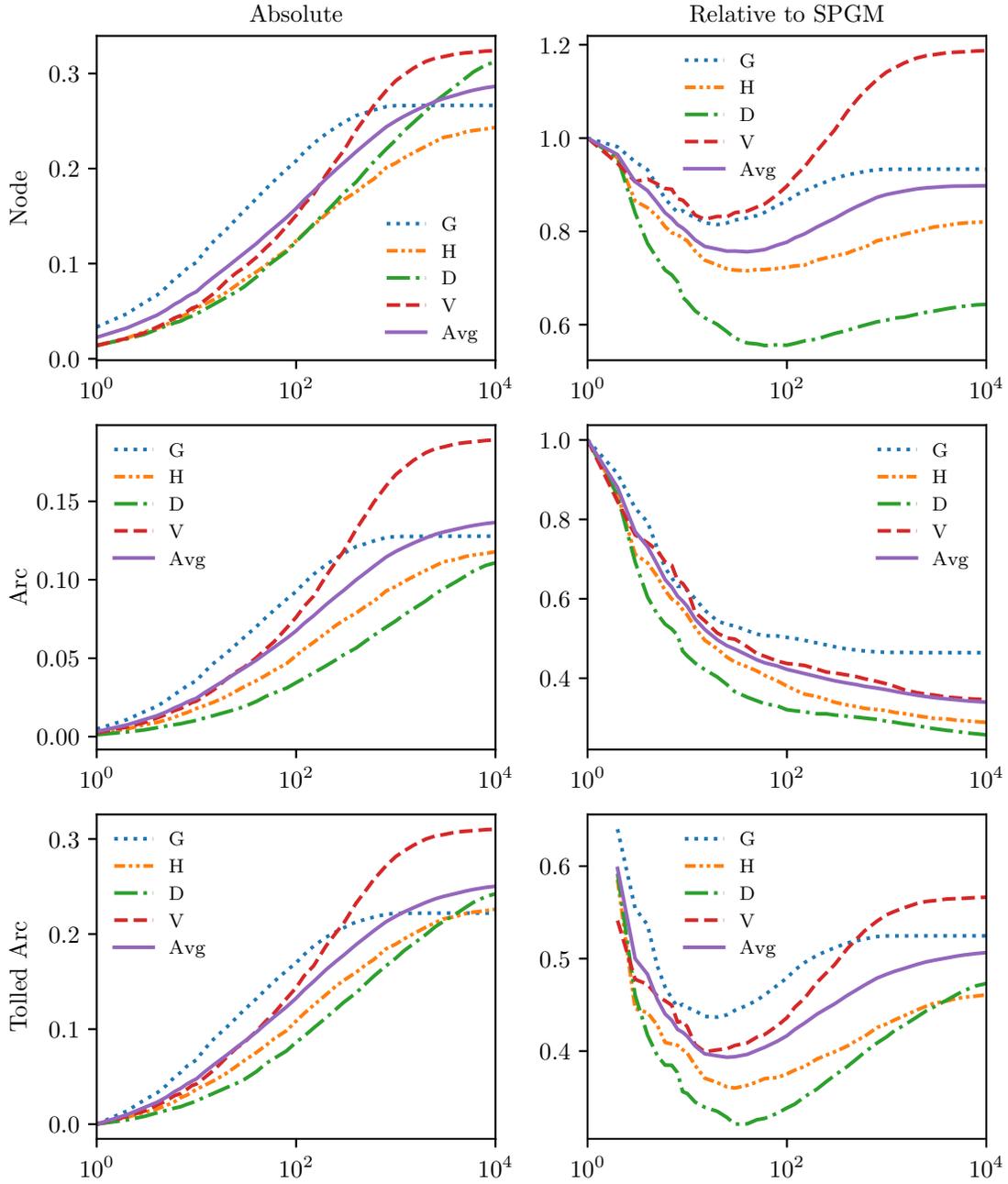}}
    \caption{Reduction ratio of Path-based Preprocessing.}
    \label{fig:reduceratio}
\end{figure}

\begin{table}
    \centering
    \begin{tabular}{ll}
        \toprule
        Criteria   & Impact                                                            \\
        \midrule
        Node       & \tabitem Num. of constraints in primal-arc                         \\
                   & \tabitem Num. of variables in dual-arc                             \\
        Arc        & \tabitem Num. of variables in primal-arc (strong duality)          \\
                   & \tabitem Num. of binary variables in primal-arc                    \\
                   & (complementary slackness)                                          \\
                   & \tabitem Num. of constraints in dual-arc                           \\
                   & \tabitem Num. of constraints in complementary slackness            \\
        Tolled arc & \tabitem Num. of binary variables in primal-arc                    \\
                   & \tabitem Num. of variables and constraints in direct linearization \\
        \bottomrule
    \end{tabular}
    \caption{Impact of the reduction ratio criteria.}
    \label{tab:criteria}
\end{table}

Figure \ref{fig:spgmperm} shows the real performance of the two preprocessing methods over \(N\).
The main formulation in both models is the (STD), but one is matched with path-based preprocessing,
while the other is matched with SPGM preprocessing.
Once again, we can observe that the path-based preprocessing excels in all three criteria. It can solve 30 more problems (over 200 in total),
in half the time and half the gap compared to the SPGM model. Besides that, the SPGM model hardly improves when \(N > 1000\)
while path-based preprocessing keeps thriving beyond this threshold.
In our experiments, applying SPGM on top of the path-based preprocessing does not produce significantly better results, and
sometimes can even be detrimental. This concludes that path-based preprocessing is the superior preprocessing method.

\begin{figure}
    \centering
    \input{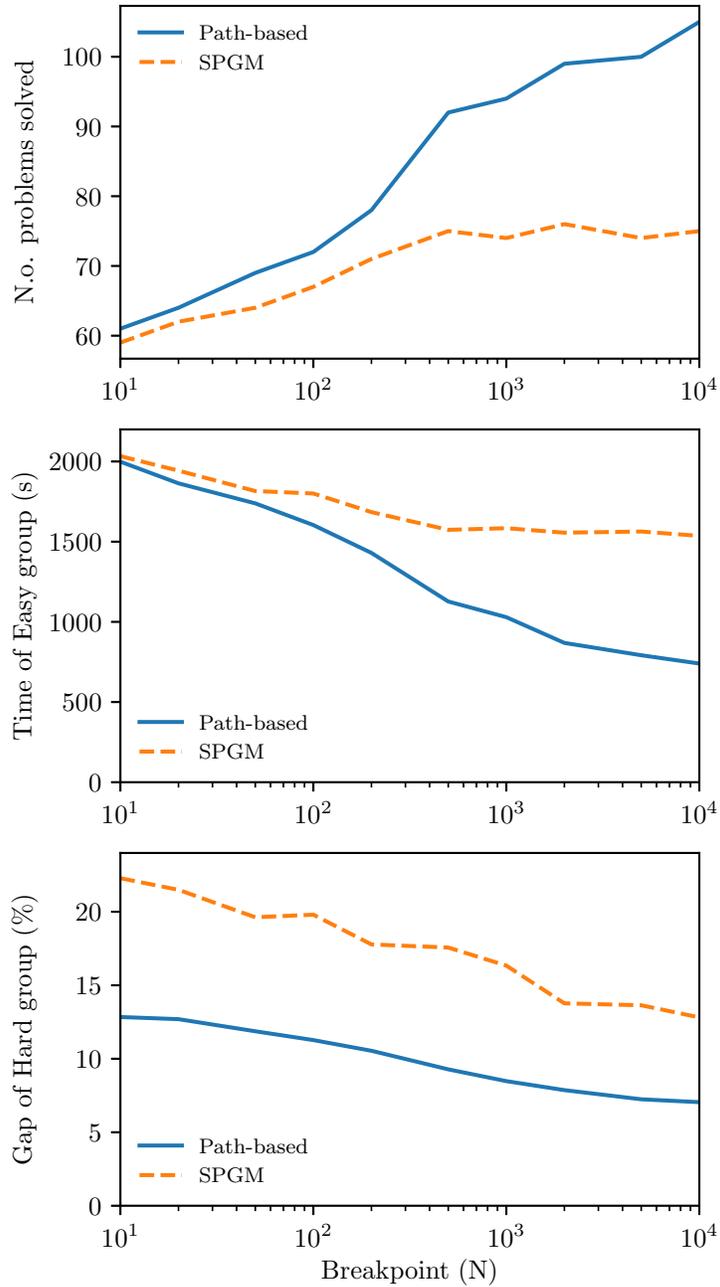}

    \caption{Performance comparison between Path-based preprocessing and SPGM preprocessing.}
    \label{fig:spgmperm}
\end{figure}

\subsection{Hybrid Model}
In this work, we show numerical results to justify the idea of hybrid model introduced in Section 4
as a way to compromise between the time spent for path enumeration and the time spent to solve the reformulation.
We will use the best reformulation which is the (STD) for all commodities in this test.
Figure \ref{fig:hybrid} shows its performance with \(N\) varying from 10 to 100000 (horizontal axis).
If a commodity has no more than \(N\) bilevel feasible paths, then path-based preprocessing is applied. Otherwise,
the unprocessed graph is used instead. The larger \(N\) is, the longer it takes to enumerate,
but more commodities will be processed. If \(N\) is very large, all commodities will be processed and all paths are enumerated.
If \(N\) is set to 0, then it is identical to an unprocessed model.
From Figure \ref{fig:hybrid}, we observe that larger \(N\) generally leads to better performance until a certain point around 10000 paths.
After that, the time spent for path enumeration does not have a positive impact on the performance anymore.
The average time of the easy group is not affected by large \(N\) because they do not have many paths
to enumerate in the first place.
At \(N = 100000\), there are two unsolved problems in class D which require more than 1 hour for path enumeration
and this contributes to the rise at the end in the average optimality gap of the hard group.
In conclusion, for the given instances, stopping the path enumeration at \(N = 10000\) provides a good balance.
Stopping early also improves the robustness of the program, because although commodities with more than 10000 paths are not common,
encountering one could make the program stuck for a long time.

\begin{figure}
    \centering
    \input{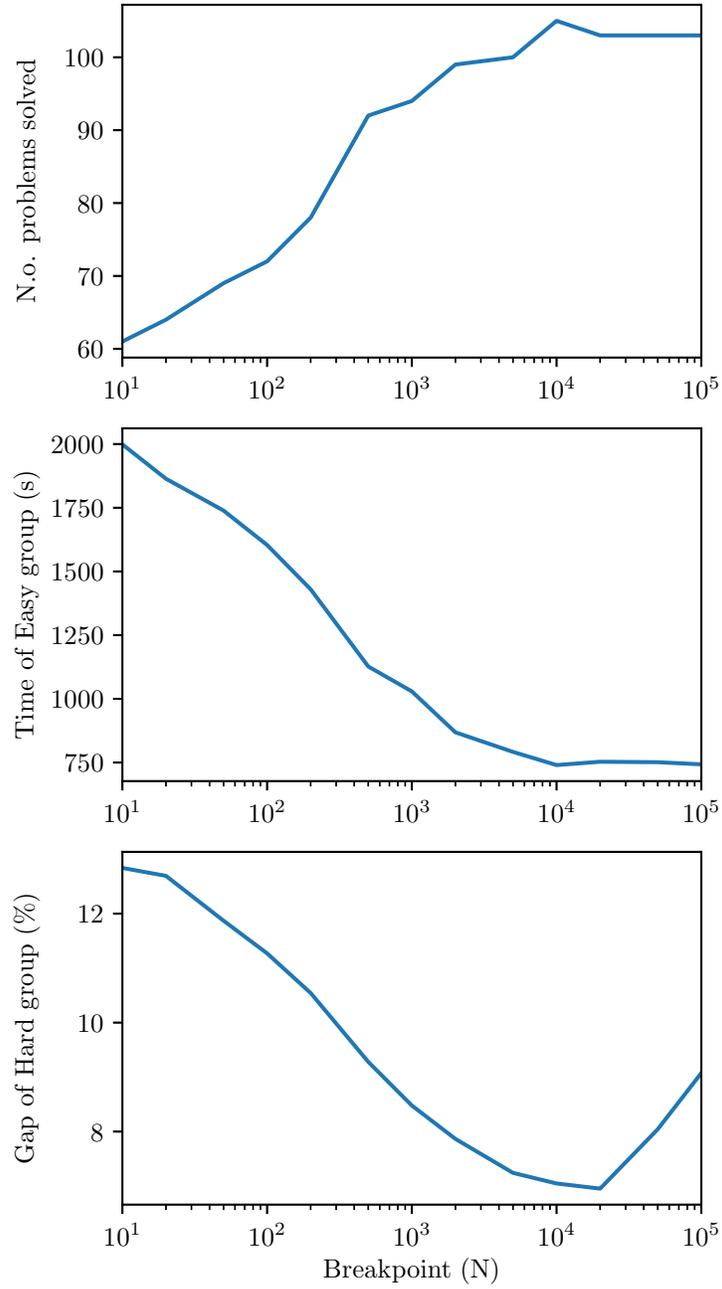}

    \caption{Performance of the standard model over \(N\).}
    \label{fig:hybrid}
\end{figure}

\section{Conclusion}
In this work, we related many modeling concepts which enrich and unify the general toolbox
to solve bilevel problems. If the follower problem can be formulated in multiple ways,
then we can mix the primal and the dual (if it exists) of any two formulations to write the
single-level formulation. In the case of a (mixed-integer) linear follower problem,
this alternative formulation always exists:
the follower problem can be written as a linear combination of
the extreme points of the convex hull of the feasible set.
Bilevel feasibility, and its enumeration, can be used as a preprocessing method. The hybrid framework is applicable
in any multi-commodity problem with multiple reformulations. The network pricing problem is a good example to demonstrate
all these modeling techniques together.

The NPP makes some theoretical assumptions whose relaxation enables a more close mirroring of reality. From a practical point of view, future work must consider additional real-life features, including arc construction cost, capacity on arcs and nodes, congestion, competition, and uncertainty,
many of which are highly non-linear and make the problem significantly harder.

\section*{Acknowledgements}

The authors are grateful for the support of Institut de valorisation des donn\'ees (IVADO) and Fonds de recherche du Qu\'ebec (FRQ) through the FRQ-IVADO Research Chair, and of the  Natural Sciences and Engineering Council of Canada (NSERC) through its  Discovery  Grant  program.

This research was enabled in part by support provided by  Calcul Qu\'ebec (\url{www.calculquebec.ca})
and Compute Canada (\url{www.computecanada.ca}).

\bibliographystyle{plain}
\bibliography{ref}

\end{document}